\newtheorem{theorem}{Theorem}
\newtheorem{corollary}[theorem]{Corollary}
\newtheorem{lemma}[theorem]{Lemma}
\newtheorem{proposition}[theorem]{Proposition}
\newtheorem*{tamesymbol}{Definition}
\newtheorem*{introtheorem}{Theorem}
\newcommand{\GL}{\operatorname{GL}}
\newcommand{\M}{\operatorname{M}}
\begin{document}

\title{The tame symbol and determinants of Toeplitz operators}
\author{Efton Park}
\address{Department of Mathematics, Box 298900, 
Texas Christian University, Fort Worth, TX 76129}
\email{e.park@tcu.edu}
\keywords{Toeplitz operators, determinants, tame symbol}
\subjclass[2000]{47B35, 15A15, 19F15} 

\begin{abstract}
Suppose that $\phi$ and $\psi$ are smooth complex-valued functions on the circle that are invertible, have
winding number zero with respect to the origin, and have meromorphic extensions to an open neighborhood of the
closed unit disk.  Let $T_\phi$ and $T_\psi$ denote the Toeplitz operators with symbols $\phi$ and $\psi$ respectively.
We give an explicit formula for the determinant of $T_\phi T_\psi T_\phi^{-1} T_\psi^{-1}$ in terms of the products of 
the tame symbols of $\phi$ and $\psi$ on the open unit disk.
\end{abstract}
\maketitle

For each continuous complex valued function $\phi$ on the unit circle, let $T_\phi$ denote the 
Toeplitz operator $T_\phi$ on the Hardy space $H^2(S^1)$.  If $\phi$ and $\psi$ are
smooth functions on $S^1$, it is easy to check that the commutator $T_\phi T_\psi - T_\psi T_\phi$ 
is an element of the ideal $\mathcal{L}^1$ of trace class operators on $H^2(S^1)$.  If in addition $\phi$ and $\psi$ are 
invertible and have winding number zero with respect
to the origin, then $T_\phi$ and $T_\psi$ are invertible (\cite{Douglas}, Corollary 7.27).  The multiplicative
commutator $T_\phi T_\psi T_\phi^{-1} T_\psi^{-1} = I + (T_\phi T_\psi - T_\psi T_\phi) T_\phi^{-1} T_\psi^{-1}$
has the form identity plus a trace class operator, and therefore has a well-defined determinant.  There are various
integral formulas for this determinant (\cite{HH}, Proposition 10.1; \cite{Brown1}, Section 6; 
\cite{Kaad}, Theorem 6.2).  There is also a formula (\cite{CP}, Proposition 1) that uses ideas from 
\cite{Deligne} to expresses this determinant in terms of a quantity called the \emph{tame symbol}.
In this paper, we give a relatively elementary proof of the
result in \cite{CP} in cases where $\phi$ and $\psi$ are smooth invertible functions
on the unit circle that have meromorphic extensions to a neighborhood of the closed unit disk.

\begin{tamesymbol}
Suppose that $\phi$ and $\psi$ are restrictions of meromorphic functions (which we also denote $\phi$ and $\psi$)
defined in a neighborhood of the closed unit disk such that neither $\phi$ nor $\psi$ has zeros or poles on the unit circle.  For each point $z$ in the open unit disk $\mathbb{D}$, define 
\[
v(\phi, z) =
 \begin{cases}
\ \ m &\text{ if $\phi$ has a zero of order $m$ at $z$} \\
-m &\text{ if $\phi$ has a pole of order $m$ at $z$} \\
\ \ \ 0 &\text{ if $\phi$ has neither a zero nor a pole at $z$,}
\end{cases}
\]
and similarly define $v(\psi, z)$.  The quantity
\[
\lim_{w \to z} (-1)^{v(\phi, z) v(\psi, z)}\frac{\psi(w)^{v(\phi, z)}}{\phi(w)^{v(\psi, z)}}
\]
is called the \emph{tame symbol} of $\phi$ and $\psi$ at $z$ and is denoted $(\phi, \psi)_z$.  
\end{tamesymbol}

\begin{introtheorem}\label{main}
Suppose that $\phi$ and $\psi$ satisfy the condition stated in the previous definition and 
have winding number zero with respect to the origin.  Then
\[
\det(T_\phi T_\psi T_\phi^{-1} T_\psi^{-1})  = \prod_{z \in \mathbb{D}} (\phi, \psi)_z^{-1}.
\]
\end{introtheorem}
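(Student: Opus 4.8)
The plan is to exploit multiplicativity in the two variables $\phi$ and $\psi$. For bounded invertible operators $A,B$ with $AB-BA$ trace class, put $d(A,B)=\det(ABA^{-1}B^{-1})$. The identity
\[
(A_1A_2)B(A_1A_2)^{-1}B^{-1}=A_1\bigl(A_2BA_2^{-1}B^{-1}\bigr)A_1^{-1}\cdot\bigl(A_1BA_1^{-1}B^{-1}\bigr),
\]
together with multiplicativity and conjugation invariance of the Fredholm determinant, gives $d(A_1A_2,B)=d(A_1,B)\,d(A_2,B)$, and $d(A,B)=d(B,A)^{-1}$ is immediate. Because $T_{\phi\psi}-T_\phi T_\psi$ is trace class and $T_\phi$ is invertible when $\phi$ has winding number zero, the correction term $T_{\phi_1\phi_2}(T_{\phi_1}T_{\phi_2})^{-1}$ is of the form identity plus trace class, so its multiplicative commutator with $T_\psi$ has determinant $1$; hence $\phi\mapsto d(T_\phi,T_\psi)$ is multiplicative (and antisymmetric) on the group of admissible symbols. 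On the right-hand side, additivity of the order functions $v(\,\cdot\,,z)$ makes $(\phi,\psi)\mapsto\prod_{z\in\mathbb{D}}(\phi,\psi)_z$ bimultiplicative and antisymmetric as well. So it suffices to verify the formula on a generating set.

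For a normal form: if $\phi$ is meromorphic near $\overline{\mathbb{D}}$, has no zeros or poles on $S^1$, and has winding number zero, the argument principle shows it has equally many zeros $a_1,\dots,a_n$ and poles $b_1,\dots,b_n$ in $\mathbb{D}$ (with multiplicity), whence
\[
\phi(z)=g(z)\prod_{i=1}^{n}\frac{1-a_i/z}{1-b_i/z},
\]
with $g$ analytic and non-vanishing on a neighborhood of $\overline{\mathbb{D}}$. Each $1-a/z$ extends to a nonvanishing bounded analytic function off $\overline{\mathbb{D}}$, so it, and every partial product, has winding number zero; thus $d$ may be expanded over these factors. So the admissible symbols are generated by the functions $g$ analytic and invertible near $\overline{\mathbb{D}}$ together with the functions $1-a/z$, $a\in\mathbb{D}$, and only three pairings need be checked. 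If $\phi$ and $\psi$ are both analytic and non-vanishing near $\overline{\mathbb{D}}$, then $T_\phi$ and $T_\psi$ commute (both symbols lie in $H^\infty$, so $T_\phi T_\psi=T_{\phi\psi}=T_{\psi\phi}=T_\psi T_\phi$) and neither symbol has a zero or pole in $\mathbb{D}$, so both sides are $1$. If $\phi=1-a/z$ and $\psi=1-b/z$, then both symbols extend to bounded analytic functions off $\overline{\mathbb{D}}$, so $T_\phi$ and $T_\psi$ commute again and the left side is $1$; that $\prod_{z\in\mathbb{D}}(\phi,\psi)_z=1$ here is a three-term residue computation at the points $a$, $b$, $0$ (equivalently, Weil reciprocity for these rational functions).

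The essential case is $\psi=1-a/z$ with $a\neq0$ and $\phi$ analytic and non-vanishing near $\overline{\mathbb{D}}$ (the case $a=0$ being trivial). Then $T_\phi^{-1}=T_{1/\phi}$ and $T_\psi^{-1}=T_{1/\psi}$, and
\[
T_\phi T_\psi T_\phi^{-1}T_\psi^{-1}=I+[T_\phi,T_\psi]\,T_{1/\phi}T_{1/\psi},
\]
where a direct computation identifies $[T_\phi,T_\psi]=T_\phi T_\psi-T_\psi T_\phi$ as the rank-one operator $f\mapsto a\langle f,1\rangle\,\dfrac{\phi(z)-\phi(0)}{z}$. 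Hence the determinant is $1+\operatorname{tr}\bigl([T_\phi,T_\psi]T_{1/\phi}T_{1/\psi}\bigr)$, and evaluating the trace — applying $T_{1/\psi}^{\,*}T_{1/\phi}^{\,*}$ to the constant function $1$, which returns a multiple of $(1-\bar a z)^{-1}$, and then pairing against $\frac{\phi(z)-\phi(0)}{z}$, which amounts to evaluating that analytic function at $a$ — yields $\phi(a)/\phi(0)$. On the other side, $\psi=(z-a)/z$ has a simple zero at $a$ and a simple pole at $0$, so $(\phi,\psi)_a=1/\phi(a)$, $(\phi,\psi)_0=\phi(0)$, every other tame symbol is $1$, and $\prod_{z\in\mathbb{D}}(\phi,\psi)_z^{-1}=\phi(a)/\phi(0)$, in agreement with the determinant.

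Two steps will need care. The reduction must be done honestly: the operators $T_\phi$ are not themselves identity plus trace class, so one leans on conjugation invariance of the Fredholm determinant, and one must track that every partial product coming out of the normal form still has winding number zero, so that all Toeplitz operators in sight are invertible with trace-class multiplicative commutators. The real content, however, is the explicit rank-one identification of $[T_\phi,T_\psi]$ for $\psi=1-a/z$ and the trace evaluation that follows; this is where the meromorphic hypothesis and the precise form of the tame symbol enter, and I expect it to be the main obstacle.
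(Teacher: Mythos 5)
Your argument is correct, but it takes a genuinely different route from the paper's. The paper never reduces the problem at the level of the scalar operators $T_\phi$: it passes to $3\times 3$ matrices over the Toeplitz algebra, lifting $\operatorname{diag}(\phi,\phi^{-1},1)$ and $\operatorname{diag}(\psi,1,\psi^{-1})$ to invertibles $R,S \in \GL(3,\mathcal{T}^\mu)$ and defining $\delta(\phi,\psi)=\det(RSR^{-1}S^{-1})$. Because that pairing makes sense for symbols of \emph{arbitrary} winding number, the paper can take as generators the holomorphic nonvanishing functions together with the linear factors $z-\alpha$ (winding number one, so $T_{z-\alpha}$ is not even invertible), and it settles the base cases by an explicit eigenvalue analysis of operators such as $T_\psi T_{\psi^{-1}}+T_\phi(I-T_\psi T_{\psi^{-1}})$. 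You instead stay with scalar Toeplitz operators throughout, which forces the winding-number-zero generators $1-a/z=(z-a)/z$; your base case rests on the rank-one identification of $[T_\phi,T_{1-a/z}]$ and $\det(I+K)=1+\operatorname{tr}K$ for rank-one $K$. I checked the key computations: the commutator formula $[T_\phi,T_{1-a/z}]f=a\langle f,1\rangle\,(\phi(z)-\phi(0))/z$ is right, the trace evaluates to $\phi(a)/\phi(0)-1$ so the determinant is $\phi(a)/\phi(0)$, and this matches $(\phi,\psi)_a^{-1}(\phi,\psi)_0^{-1}=\phi(a)/\phi(0)$ once the pole of $(z-a)/z$ at the origin is taken into account; the Weil-reciprocity identity in the $(1-a/z,\,1-b/z)$ case also checks out. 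Your route is more elementary for the theorem as stated; what the paper's matrix machinery buys is a strictly stronger conclusion, namely a formula for $\det(ABA^{-1}B^{-1})$ for arbitrary invertible $A,B\in\mathcal{T}^\mu$ with no winding-number restriction on the symbols, of which the stated theorem is a corollary. In a full write-up you should make explicit that multiplicativity in $\phi$ uses $\det(UXU^{-1})=\det(X)$ for determinant-class $X$ and bounded invertible $U$, together with $\det(U)\det(U^{-1})=1$ for $U=T_{\phi_1\phi_2}(T_{\phi_1}T_{\phi_2})^{-1}$, and that every partial product in your normal form indeed has winding number zero; your sketch already points at both.
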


In fact we prove a somewhat more general result that allows us to consider determinants of operators
involving invertible functions $\phi$ and $\psi$ that have nonzero winding numbers.  We accomplish this
by using a construction from algebraic $K$-theory; L. Brown showed in \cite{Brown2} that a close
relationship exists between operator determinants and the connecting map in the algebraic $K$-theory
long exact sequence.  We then use the multiplicative properties of the determinant and the tame symbol
to reduce the problem to cases where we can prove the desired result by direct computation.

We begin by establishing some notation.  Let $C^\mu(S^1)$ denote the algebra of functions
on the unit circle that have a meromorphic extension to a neighborhood of the closed unit disk; we will abuse
notation through the paper by using the same symbol to denote a function in $C^\mu(S^1)$ and its
meromorphic extension. We will write $(C^\mu(S^1))^*$ for the group under multiplication of those functions
in $C^\mu(S^1)$ whose meromorphic extension has no poles or zeros on $S^1$.  

\begin{proposition}\label{properties of pi}
Define $\pi: (C^\mu(S^1))^* \longrightarrow \mathbb{C}^*$ by the formula
\[
\pi(\phi, \psi) = \prod_{z \in \mathbb{D}} (\phi, \psi)_z^{-1}.
\]
Then for all $\phi$, $\phi^\prime$, $\psi$, and $\psi^\prime$ in $(C^\mu(S^1))^*$,
\begin{enumerate}
\item[(i)] $\pi(\phi\phi^\prime, \psi) =  \pi(\phi, \psi) \pi(\phi^\prime, \psi)$;
\item[(ii)] $\pi(\phi, \psi\psi^\prime) =  \pi(\phi, \psi)  \pi(\phi, \psi^\prime)$;
\item[(iii)] $\pi(\psi, \phi) = \left(\pi(\phi, \psi)\right)^{-1}$.
\end{enumerate}
\end{proposition}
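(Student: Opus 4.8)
The plan is to deduce all three identities from pointwise statements about the individual tame symbols $(\,\cdot\,,\cdot\,)_z$ and then take the product over $z\in\mathbb{D}$. First I would observe that this product is finite: a meromorphic function defined on a neighborhood of the closed disk that is not identically zero has only finitely many zeros and poles in $\mathbb{D}$, so $v(\phi,z)=v(\psi,z)=0$ for all but finitely many $z$, and at every such $z$ one has $(\phi,\psi)_z=1$. Hence $\pi$ really is a well-defined map into $\mathbb{C}^*$, and it suffices to prove
\[
(\phi\phi',\psi)_z = (\phi,\psi)_z\,(\phi',\psi)_z, \qquad (\phi,\psi\psi')_z = (\phi,\psi)_z\,(\phi,\psi')_z, \qquad (\psi,\phi)_z = (\phi,\psi)_z^{-1}
\]
for every $z\in\mathbb{D}$; multiplying over $z$ then yields (i) and (ii) immediately and yields (iii) after inverting both sides.

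Fix $z\in\mathbb{D}$ and abbreviate $a=v(\phi,z)$, $a'=v(\phi',z)$, $b=v(\psi,z)$. The single analytic fact I would isolate and reuse is the following: if a meromorphic function $f$ near $z$ has a zero or pole of order $a$ there and $g$ has a zero or pole of order $b$ there, then $g^a$ and $f^b$ each have order $ab$ at $z$, so $g^a/f^b$ has order $0$, and therefore $\lim_{w\to z} g(w)^a/f(w)^b$ exists and lies in $\mathbb{C}^*$. (Taking $f=\phi$, $g=\psi$ this also shows $(\phi,\psi)_z$ itself is well defined.) This is exactly what makes it legitimate to split a tame symbol into a product of tame symbols: after splitting, the relevant limit in each factor is finite and nonzero, so limits distribute across products and quotients.

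For the first identity, additivity of orders gives $v(\phi\phi',z)=a+a'$, hence
\[
(\phi\phi',\psi)_z = (-1)^{(a+a')b}\,\lim_{w\to z}\frac{\psi(w)^{a+a'}}{\phi(w)^{b}\phi'(w)^{b}} = (-1)^{ab}\lim_{w\to z}\frac{\psi(w)^{a}}{\phi(w)^{b}}\;(-1)^{a'b}\lim_{w\to z}\frac{\psi(w)^{a'}}{\phi'(w)^{b}} = (\phi,\psi)_z\,(\phi',\psi)_z,
\]
using $(-1)^{(a+a')b}=(-1)^{ab}(-1)^{a'b}$ together with the fact that each of the two limits on the right is finite and nonzero. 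The second identity follows from the mirror-image computation, using $v(\psi\psi',z)=b+b'$ with the roles of the two slots interchanged. For the third, directly from the definition,
\[
(\psi,\phi)_z\,(\phi,\psi)_z = (-1)^{2ab}\lim_{w\to z}\frac{\phi(w)^{b}}{\psi(w)^{a}}\,\lim_{w\to z}\frac{\psi(w)^{a}}{\phi(w)^{b}} = 1,
\]
once again because each limit lies in $\mathbb{C}^*$, so their product equals $\lim_{w\to z}1=1$; thus $(\psi,\phi)_z=(\phi,\psi)_z^{-1}$ for every $z$.

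The computation is elementary, and the only place a careless manipulation could go wrong is precisely the bookkeeping of orders of vanishing flagged above: without checking that each numerator-over-denominator factor has order zero at $z$, one cannot pass from the limit of a product to the product of limits, since the individual numerators and denominators may blow up or vanish. Once that point is secured, (i)--(iii) reduce respectively to the exponent identities $(-1)^{(a+a')b}=(-1)^{ab}(-1)^{a'b}$, its analogue in the second slot, and $(-1)^{2ab}=1$, plus elementary exponent arithmetic in $\mathbb{C}^*$.
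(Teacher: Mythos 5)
Your proof is correct and matches the paper's intent exactly: the paper's entire proof reads ``Obvious from the definition of the tame symbol,'' and your argument --- finiteness of the product, the order-zero observation that each quotient $\psi^{a}/\phi^{b}$ has vanishing order at $z$ so the limits may be split, and the exponent identities for $(-1)^{ab}$ --- is precisely the routine verification the author is suppressing. No discrepancy to report.
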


\begin{proof}
Obvious from the definition of the tame symbol.
\end{proof}

Set
\[
\mathcal{T}^\mu = \{T_\phi + L : \phi \in C^\mu(S^1), L \in \mathcal{L}^1\}.
\]
Because functions in $C^\mu(S^1)$ are smooth on the unit circle, the set $\mathcal{T}^\mu$ is closed under
multiplication and is a $\mathbb{C}$-algebra.  We have a short exact sequence
\[
\xymatrix@C=30pt
{0 \ar[r]^-{} & \mathcal{L}^1 \ar[r]^-{} & \mathcal{T}^\mu \ar[r]^-{\sigma}  & C^\mu(S^1) \ar[r]^-{} & 0,}
\]
where $\sigma$ is the symbol map defined by $\sigma(T_\phi + L) = \phi$ for every $T_\phi + L$ in
$\mathcal{T}^\mu$.  This exact sequence gives rise to a short exact sequence of matrices
\[
\xymatrix@C=30pt
{0 \ar[r]^-{} & \M(3, \mathcal{L}^1) \ar[r]^-{} & \M(3, \mathcal{T}^\mu \ar[r]^-{\sigma})  & \M(3, C^\mu(S^1)) \ar[r]^-{} & 0}
\]
with $\sigma$ defined entrywise.  

\begin{lemma}\label{existence of lifts}
Let $\phi$ and $\psi$ be elements of $(C^\mu(S^1))^*$ and define matrices
\[
\Phi^{1,2} = \begin{pmatrix} \phi & 0 & 0 \\ 0 & \phi^{-1} & 0 \\ 0 & 0 & 1 \end{pmatrix}, \qquad
\Psi^{1,3} = \begin{pmatrix} \psi & 0 & 0 \\ 0 & 1 & 0 \\ 0 & 0 & \psi^{-1} \end{pmatrix}.
\]
Then there exists matrices $R_\psi$ and $S_\phi$ in $\GL(3, \mathcal{T}^\mu)$ such that
$\sigma(R_\phi) = \Phi^{1,2}$ and $\sigma(S_\psi) = \Psi^{1,3}$.
\end{lemma}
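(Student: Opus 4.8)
The plan is to avoid lifting $\phi$ itself, which is impossible in general: $\phi$ is allowed to have nonzero winding number, and then $T_\phi$ fails to be invertible. Instead I would exploit the fact that the block-diagonal matrices $\Phi^{1,2}$ and $\Psi^{1,3}$ have ``determinant shape'' $\operatorname{diag}(\phi,\phi^{-1},1)$ and are therefore products of elementary matrices, by Whitehead's lemma. First I would record that the entrywise symbol map $\sigma\colon \M(3,\mathcal{T}^\mu)\to\M(3,C^\mu(S^1))$ is a unital $\mathbb{C}$-algebra homomorphism (the unit of $\mathcal{T}^\mu$ is $I=T_1$), so it is enough to write each of $\Phi^{1,2}$, $\Psi^{1,3}$ as a product of matrices, each of which visibly lifts to an \emph{invertible} element of $\M(3,\mathcal{T}^\mu)$. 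Two kinds of matrices have this property: any elementary matrix (the identity plus a single off-diagonal entry $a\in C^\mu(S^1)$) lifts to the elementary matrix over $\mathcal{T}^\mu$ with that entry replaced by $T_a$, whose inverse is the elementary matrix with entry $-T_a$; and any matrix with constant scalar entries lifts to the same matrix read over $\mathcal{T}^\mu$, using $T_c=cI$, and retains its (scalar) inverse.

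Next I would use the standard factorization
\[
\begin{pmatrix}\phi & 0\\ 0 & \phi^{-1}\end{pmatrix}
=\begin{pmatrix}1 & \phi\\ 0 & 1\end{pmatrix}
\begin{pmatrix}1 & 0\\ -\phi^{-1} & 1\end{pmatrix}
\begin{pmatrix}1 & \phi\\ 0 & 1\end{pmatrix}
\begin{pmatrix}0 & -1\\ 1 & 0\end{pmatrix},
\]
which makes sense since both $\phi$ and $\phi^{-1}$ lie in $(C^\mu(S^1))^*$. Placing each factor in the upper-left $2\times 2$ block and $1$ in the $(3,3)$ entry exhibits $\Phi^{1,2}$ as a product of three elementary matrices and one constant matrix. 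Replacing $\phi$ and $-\phi^{-1}$ by $T_\phi$ and $-T_{\phi^{-1}}$, and the constant block $\left(\begin{smallmatrix}0&-1\\1&0\end{smallmatrix}\right)$ by $\left(\begin{smallmatrix}0&-I\\I&0\end{smallmatrix}\right)$, produces a matrix $R_\phi\in\GL(3,\mathcal{T}^\mu)$; multiplicativity of $\sigma$ together with $\sigma(T_a)=a$ then gives $\sigma(R_\phi)=\Phi^{1,2}$. The matrix $S_\psi$ with $\sigma(S_\psi)=\Psi^{1,3}$ is obtained by the identical argument, using rows and columns $1$ and $3$ in place of $1$ and $2$ and the function $\psi$ in place of $\phi$.

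The only conceptual point — which I would flag as the main obstacle — is precisely that invertibility of the lift must not be allowed to depend on invertibility of $T_\phi$. The Whitehead factorization is what makes this work: each factor of $R_\phi$ is either unipotent or a fixed signed permutation matrix with scalar entries, and in either case its inverse is an explicit matrix over $\mathcal{T}^\mu$, so $R_\phi$ is invertible over $\mathcal{T}^\mu$ no matter what the winding number of $\phi$ is. Everything else — that $\mathcal{T}^\mu$ is unital, that $\sigma$ is a ring homomorphism, and that $\phi\mapsto T_\phi$ is $\mathbb{C}$-linear — is routine and already built into the short exact sequence constructed above.
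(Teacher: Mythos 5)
Your proof is correct and is essentially the paper's own argument: the ``recipe from Corollary 2.1.3 in \cite{Rosenberg}'' that the paper invokes is exactly the Whitehead factorization you use, and multiplying out your four factors with $T_\phi$ and $T_{\phi^{-1}}$ in place of $\phi$ and $\phi^{-1}$ yields precisely the explicit matrix $R_\phi$ (and its inverse) displayed in the paper's proof. The only difference is presentational --- the paper records the product in closed form rather than leaving it factored.
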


\begin{proof}
Using the recipe from Corollary 2.1.3 in \cite{Rosenberg}, we know that $\Phi^{1,2}$ lifts to a matrix
\[
R_\phi = \begin{pmatrix}
2T_\phi - T_\phi T_{\phi^{-1}}T_\phi & T_\phi T_{\phi^{-1}} - I & 0 \\
I - T_{\phi^{-1}} T_\phi & T_{\phi^{-1}} & 0 \\
0 & 0 & I
\end{pmatrix}
\]
in $\GL(3, \mathcal{T}^\mu)$ with inverse
\[
\begin{pmatrix}
 T_{\phi^{-1}} & I -  T_{\phi^{-1}}T_\phi & 0 \\
 T_\phi  T_{\phi^{-1}} - I & 2T_\phi - T_\phi  T_{\phi^{-1}} T_\phi & 0 \\
 0 & 0 & I
\end{pmatrix}. 
\]
Similarly, for any $\psi$ in $(C^\mu(S^1))^*$, the matrix $\Psi^{1,3}$ lifts to the invertible matrix
\[
S_\psi = \begin{pmatrix}
2T_\psi - T_\psi T_{\psi^{-1}}T_\psi & 0 &T_\psi T_{\psi^{-1}} - I \\
0 & I & 0 \\
I - T_{\psi^{-1}} T_\psi & 0 & T_{\psi^{-1}} 
\end{pmatrix}
\]
with inverse
\[
\begin{pmatrix}
T_{\psi^{-1}} &  0  & I - T_{\psi^{-1}}T_\psi \\
0 & I & 0 \\
T_\psi T_{\psi^{-1}} - I  & 0 &  2T_\psi - T_\psi T_{\psi^{-1}}T_\psi  
\end{pmatrix}.
\]
\end{proof}

There are other choices for lifts of $\Phi^{1,2}$ and $\Psi^{1,3}$, but the next lemma
shows that for our purposes the choice of lift does not matter.

\begin{lemma}\label{delta well defined}
Let $\phi$ and $\psi$ be elements of $(C^\mu(S^1))^*$ and let $R$ and $S$ be elements of 
$\GL(3, \mathcal{T}^\mu)$ such that $\sigma(R) = \Phi^{1,2}$ and $\sigma(S) = \Psi^{1,3}$.
Then $\det(RSR^{-1}S^{-1})$ is independent of the choices of $R$ and $S$.
\end{lemma}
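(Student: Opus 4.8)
The plan is to recast the statement in terms of a single group and a distinguished normal subgroup on which the determinant lives. Write $G = \GL(3, \mathcal{T}^\mu)$ and
\[
N = \{\, X \in G : X - I \in \M(3, \mathcal{L}^1) \,\}.
\]
First I would record some elementary facts. The set $N$ is a subgroup of $G$: if $X \in N$ then $X^{-1} - I = -X^{-1}(X - I)$ lies in $\M(3,\mathcal{L}^1)$ because $\mathcal{L}^1$ is an ideal of $B(H^2(S^1))$ and the entries of $X^{-1}$ are bounded operators, and products are handled the same way. The same ideal property shows that $N$ is normal in $G$, since $gXg^{-1} - I = g(X - I)g^{-1} \in \M(3,\mathcal{L}^1)$ for every $g \in G$. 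On $N$ the Fredholm determinant $X \mapsto \det\bigl(I + (X - I)\bigr)$ is defined and multiplicative, and---the one standard property of Fredholm determinants I would invoke---it is invariant under similarity by bounded invertible operators, so $\det(gXg^{-1}) = \det(X)$ for every $g \in G$ and $X \in N$.

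The engine of the argument is then the following consequence: if $Y = n_1 n_2 \cdots n_k$ with each $n_j$ a $G$-conjugate of some element $m_j \in N$, then $Y \in N$ and $\det(Y) = \det(m_1)\det(m_2)\cdots\det(m_k)$; this is immediate from the normality of $N$, the multiplicativity of $\det$ on $N$, and similarity invariance. (In particular $\det(gXg^{-1}X^{-1}) = 1$ whenever $g \in G$ and $X \in N$.) I would also note that the statement of the lemma makes sense: because $\Phi^{1,2}$ and $\Psi^{1,3}$ are diagonal they commute, so $\sigma(RSR^{-1}S^{-1}) = \Phi^{1,2}\Psi^{1,3}(\Phi^{1,2})^{-1}(\Psi^{1,3})^{-1} = I$, and hence $RSR^{-1}S^{-1} \in N$ for every pair of lifts $R, S$ as in the statement (and likewise for any other such pair).

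Now let $R, S$ and $R', S'$ be two pairs of lifts. Since $\sigma(R') = \sigma(R) = \Phi^{1,2}$ and $\sigma(S') = \sigma(S) = \Psi^{1,3}$, the elements $a := R'R^{-1}$ and $b := S'S^{-1}$ lie in $N$, and $R' = aR$, $S' = bS$. I would change the lifts one at a time. First keeping $S$ fixed and replacing $R$ by $R'$, a direct manipulation gives
\[
\bigl(R'SR'^{-1}S^{-1}\bigr)\bigl(RSR^{-1}S^{-1}\bigr)^{-1} = a\, y\, a^{-1}\, y^{-1}, \qquad y := RSR^{-1} \in G,
\]
and the right-hand side is $a \in N$ times the $G$-conjugate $y a^{-1} y^{-1}$ of $a^{-1} \in N$, so it has determinant $\det(a)\det(a^{-1}) = 1$; since $R'SR'^{-1}S^{-1}$ and $RSR^{-1}S^{-1}$ both lie in $N$, this forces $\det(R'SR'^{-1}S^{-1}) = \det(RSR^{-1}S^{-1})$. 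Next keeping $R'$ fixed and replacing $S$ by $S'$, set $b_1 := R'bR'^{-1} \in N$; then $R'S'R'^{-1}S'^{-1} = b_1\,(R'SR'^{-1}S^{-1})\,b^{-1}$, so
\[
\bigl(R'S'R'^{-1}S'^{-1}\bigr)\bigl(R'SR'^{-1}S^{-1}\bigr)^{-1} = b_1\,\bigl[\,(R'SR'^{-1}S^{-1})\,b^{-1}\,(R'SR'^{-1}S^{-1})^{-1}\,\bigr],
\]
a product of $b_1 \in N$ with a $G$-conjugate of $b^{-1} \in N$, of determinant $\det(b_1)\det(b^{-1}) = \det(b)\det(b)^{-1} = 1$; hence $\det(R'S'R'^{-1}S'^{-1}) = \det(R'SR'^{-1}S^{-1})$. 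Combining the two equalities gives $\det(R'S'R'^{-1}S'^{-1}) = \det(RSR^{-1}S^{-1})$, which is the assertion.

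The only substantive ingredient is the similarity invariance of the Fredholm determinant; everything else is routine algebra in $G$ together with keeping track of membership in $N$. The one point I would watch carefully throughout is to apply $\det$ only to operators of the form identity-plus-trace-class---which is precisely why the argument runs through the two quotients displayed above rather than through any naive cancellation of the determinants of the individual factors $R$, $S$, $a$, $b$, none of which is itself a perturbation of the identity.
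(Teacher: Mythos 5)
Your proof is correct and rests on exactly the same two ingredients as the paper's: multiplicativity of the Fredholm determinant on the identity-plus-trace-class subgroup and its invariance under similarity by bounded invertibles, applied after changing one lift at a time. The paper carries out the same cancellation as a direct chain of determinant identities rather than packaging it through the normal subgroup $N$, but the argument is essentially identical.
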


\begin{proof}
Suppose $\widetilde{R}$ is another element of $\GL(3, \mathcal{T}^\mu)$ with the property
that $\sigma(\widetilde{R}) = \Phi^{1,2}$.  Then the multiplicativity and similarity invariance of the
determinant give us the following string of equalities:
\begin{align*}
\det(\widetilde{R}S\widetilde{R}^{-1}S^{-1}) 
&= \det(R^{-1}\widetilde{R}S\widetilde{R}^{-1}S^{-1}R) \\
&= \det(R^{-1}\widetilde{R}) \det(S\widetilde{R}^{-1}S^{-1}R) \\
&= \det(R^{-1}\widetilde{R}) \det(S\widetilde{R}^{-1}S^{-1}RSS^{-1}) \\
&= \det(R^{-1}\widetilde{R}) \det(\widetilde{R}^{-1}S^{-1}RS) \\
&= \det(R^{-1}\widetilde{R}) \det(R\widetilde{R}^{-1}S^{-1}RSR^{-1}) \\
&= \det(R^{-1}\widetilde{R}) \det(R\widetilde{R}^{-1}) \det(S^{-1}RSR^{-1}) \\
&= \det(RR^{-1}\widetilde{R}R^{-1}) \det(R\widetilde{R}^{-1}) \det(SS^{-1}RSR^{-1}S^{-1}) \\
&= \det(\widetilde{R}R^{-1}) \det(R\widetilde{R}^{-1})  \det(RSR^{-1}S^{-1}) \\
&= \det(RSR^{-1}S^{-1}).
\end{align*}
A similar argument shows that if $\widetilde{S}$ is another element of $\GL(3, \mathcal{T}^\mu)$ with the property
that $\sigma(\widetilde{S}) = \Psi^{1,3}$, then $\det(R\widetilde{S}R^{-1}\widetilde{S}^{-1}) = 
\det(RSR^{-1}S^{-1})$.
\end{proof}

In fact more is true: the quantity $\det(RSR^{-1}S^{-1})$ only depends on
the Steinberg symbol $\{\phi, \psi\}$ in the algebraic $K$-theory group $K_2(C^\mu(S^1))$; see 
\cite{Rosenberg}, Proposition 4.4.22.

In light of Lemma \ref{delta well defined}, we can define a map 
$\delta: (C^\mu(S^1))^* \longrightarrow \mathbb{C}^*$ by the formula
\[
\delta(\phi, \psi) = \det(RSR^{-1}S^{-1}),
\]
where $R$ and $S$ are any elements of $\GL(3, \mathcal{T}^\mu)$ with the feature that
$\sigma(R) = \Phi^{1,2}$ and $\sigma(S) = \Psi^{1,3}$.

\begin{proposition}\label{properties of delta}
For all $\phi$, $\phi^\prime$, $\psi$, and $\psi^\prime$ in $(C^\mu(S^1))^*$,
\begin{enumerate}
\item[(i)] $\delta(\phi\phi^\prime, \psi) =  \delta(\phi, \psi) \delta(\phi^\prime, \psi)$;
\item[(ii)] $\delta(\phi, \psi\psi^\prime) =  \delta(\phi, \psi)  \delta(\phi, \psi^\prime)$;
\item[(iii)] $\delta(\psi, \phi) = \left(\delta(\phi, \psi)\right)^{-1}$.
\end{enumerate}
\end{proposition}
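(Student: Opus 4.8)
The plan is to derive (i), (ii), and (iii) entirely from the multiplicativity, inversion-compatibility, and similarity-invariance of the Fredholm determinant, together with the observation that a product of lifts is again a lift. The one point that needs care throughout is that the determinant is defined only on $I + \M(3,\mathcal{L}^1)$ and not on all of $\GL(3,\mathcal{T}^\mu)$, so before splitting, inverting, or conjugating a determinant I must check that the operators involved differ from the identity by an element of $\M(3,\mathcal{L}^1)$. Two facts make this routine: if $X,Y \in \GL(3,\mathcal{T}^\mu)$ have diagonal, mutually commuting symbols, then $\sigma(XYX^{-1}Y^{-1}) = I$, so $XYX^{-1}Y^{-1} \in I + \M(3,\mathcal{L}^1)$; and since $\M(3,\mathcal{L}^1)$ is a two-sided ideal in $\M(3,\mathcal{T}^\mu)$, conjugating an element of $I + \M(3,\mathcal{L}^1)$ by anything in $\GL(3,\mathcal{T}^\mu)$ stays in $I + \M(3,\mathcal{L}^1)$ and leaves the determinant unchanged.

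For (i), I would fix lifts $R, R' \in \GL(3,\mathcal{T}^\mu)$ of $\Phi^{1,2}$ for the functions $\phi$ and $\phi'$, and a lift $S$ of $\Psi^{1,3}$ for $\psi$. Since $\sigma(RR') = \operatorname{diag}(\phi\phi',(\phi\phi')^{-1},1)$ is the matrix $\Phi^{1,2}$ for the function $\phi\phi'$, the product $RR'$ is a legitimate lift, and Lemma \ref{delta well defined} gives
\[
\delta(\phi\phi', \psi) = \det\big((RR')S(RR')^{-1}S^{-1}\big) = \det\big(R(R'SR'^{-1}S^{-1})R^{-1}\big)\,\det\big(RSR^{-1}S^{-1}\big),
\]
where the second equality rewrites the argument as a product of the two factors $R(R'SR'^{-1}S^{-1})R^{-1}$ and $RSR^{-1}S^{-1}$, both lying in $I + \M(3,\mathcal{L}^1)$, and then applies multiplicativity. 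By similarity-invariance the right side equals $\det(R'SR'^{-1}S^{-1})\det(RSR^{-1}S^{-1}) = \delta(\phi',\psi)\,\delta(\phi,\psi)$. Part (ii) is proved the same way with the two slots reversed: take one lift $R$ of $\Phi^{1,2}$ for $\phi$ and lifts $S,S'$ of $\Psi^{1,3}$ for $\psi,\psi'$, and rewrite $RSS'R^{-1}S'^{-1}S^{-1} = (RSR^{-1}S^{-1})\cdot S(RS'R^{-1}S'^{-1})S^{-1}$.

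For (iii), I would introduce the permutation matrix $P \in \GL(3,\mathcal{T}^\mu)$ with entries $0$ and $I$ that interchanges the second and third coordinates. Conjugation by $P$ carries $\Phi^{1,2}$ to $\operatorname{diag}(\phi,1,\phi^{-1})$ and $\Psi^{1,3}$ to $\operatorname{diag}(\psi,\psi^{-1},1)$, so if $R,S$ realize $\delta(\phi,\psi)$ then $A := PRP^{-1}$ is a lift of the matrix $\Psi^{1,3}$ for $\phi$ and $B := PSP^{-1}$ is a lift of the matrix $\Phi^{1,2}$ for $\psi$. Hence
\[
\delta(\phi,\psi) = \det(RSR^{-1}S^{-1}) = \det\big(P(RSR^{-1}S^{-1})P^{-1}\big) = \det(ABA^{-1}B^{-1}),
\]
while Lemma \ref{delta well defined}, applied with $\phi$ and $\psi$ interchanged, identifies $\det(BAB^{-1}A^{-1})$ with $\delta(\psi,\phi)$. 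Since $BAB^{-1}A^{-1} = (ABA^{-1}B^{-1})^{-1}$ and the determinant is inversion-compatible on $I + \M(3,\mathcal{L}^1)$, this gives $\delta(\psi,\phi) = \delta(\phi,\psi)^{-1}$.

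I expect the only real obstacle to be bookkeeping: keeping track at every step of which operators lie in $I + \M(3,\mathcal{L}^1)$ so that each invocation of multiplicativity, inversion, or similarity-invariance of the determinant is legitimate, and observing that Lemmas \ref{existence of lifts} and \ref{delta well defined} hold verbatim after relabeling the functions and permuting coordinates---the proof of Lemma \ref{delta well defined} uses nothing about which diagonal positions the symbols occupy, only that they are diagonal and commute. No further analytic input beyond what has already been established should be needed.
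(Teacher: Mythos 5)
Your proposal is correct and follows essentially the same route as the paper: the identity $(RR')S(RR')^{-1}S^{-1} = \bigl(RR'S(R')^{-1}S^{-1}R^{-1}\bigr)\bigl(RSR^{-1}S^{-1}\bigr)$ for (i) (and its mirror for (ii)), and conjugation by the permutation matrix swapping the second and third coordinates for (iii), are exactly the paper's decompositions. Your added bookkeeping about which operators lie in $I + \M(3,\mathcal{L}^1)$ is a point the paper leaves implicit, but it changes nothing in the argument.
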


\begin{proof}
Let $R^\prime$ be a matrix in $\GL(3, \mathcal{T}^\mu)$ whose image under $\sigma$ is
\[
\begin{pmatrix} \phi^\prime & 0 & 0 \\ 0 & (\phi^\prime)^{-1} & 0 \\ 0 & 0 & 1 \end{pmatrix}.
\]
Then
\begin{align*}
\delta(\phi\phi^\prime, \psi) &= \det(RR^\prime S(RR^\prime)^{-1}S^{-1}) \\
&= \det(RR^\prime S(R^\prime)^{-1}R^{-1}S^{-1}) \\
&= \det \left((RR^\prime S(R^\prime)^{-1})(S^{-1}R^{-1}RS)(R^{-1}S^{-1})\right) \\
&= \det(RR^\prime S(R^\prime)^{-1}S^{-1}R^{-1}) \det(RSR^{-1}S^{-1}) \\
&= \det(R^\prime S(R^\prime)^{-1}S^{-1}) \det(RSR^{-1}S^{-1}) \\
&= \det(RSR^{-1}S^{-1})  \det(R^\prime S(R^\prime)^{-1}S^{-1}) \\
&= \delta(\phi, \psi) \delta(\phi^\prime, \psi).
\end{align*}
This proves (i), and a similar argument yields (ii).  To establish (iii), let $R$ and $S$ be lifts to
$\GL(3, \mathcal{T}^\mu)$ of $\Phi^{1, 2}$ and $\Psi^{1, 3}$ respectively.  Define $J$ in
$\GL(3, \mathcal{T}^\mu)$ as 
\[
J = \begin{pmatrix} I & 0 & 0 \\ 0 & 0 & I \\ 0 & I & 0 \end{pmatrix}.
\]
Then $JRJ^{-1}$ and $JSJ^{-1}$ are elements of $\GL(3, \mathcal{T}^\mu)$ with the property that
\[
\sigma(JRJ^{-1}) = \begin{pmatrix} \phi & 0 & 0 \\ 0 & 1 & 0 \\ 0 & 0 & \phi^{-1}\end{pmatrix} \qquad
\text{and} \qquad \sigma(JSJ^{-1}) = \begin{pmatrix} \psi & 0 & 0 \\ 0 & \psi^{-1} & 0 \\ 0 & 0 & 1\end{pmatrix}.
\]
We therefore have 
\begin{align*}
\delta(\psi, \phi) &= \det\left((JSJ^{-1})(JRJ^{-1})(JSJ^{-1})^{-1}(JRJ^{-1})^{-1}\right) \\
&= \det(JSRS^{-1}R^{-1}J^{-1}) \\
&= \det(SRS^{-1}R^{-1}) \\
&= \left(\det(RSR^{-1}S^{-1}\right))^{-1} \\
&= \left(\delta(\phi, \psi)\right)^{-1}.
\end{align*}
\end{proof}

The next step is to compute $\delta(\phi, \psi)$ for some special choices of $\phi$ and $\psi$.  First we prove two lemmas
that will simplify some of our computations.

\begin{lemma}\label{inverse of holomorphic}
Let $\phi$ be a holomorphic function in a neighborhood of the closed unit disk, and suppose that
$\phi$ has no zeros on the closed unit disk.  Then $T_\phi$ is invertible,
and $T_\phi^{-1} = T_{\phi^{-1}}$.
\end{lemma}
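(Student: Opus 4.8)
The plan is to prove the statement by reducing to the key case of a Toeplitz operator with holomorphic symbol and then showing that such an operator is upper-triangular with respect to the standard monomial basis of the Hardy space. First I would recall that if $\phi$ is holomorphic in a neighborhood of the closed unit disk, then its Taylor series $\phi(z) = \sum_{n \geq 0} a_n z^n$ converges uniformly on $S^1$, so that $T_\phi$ is the compression to $H^2(S^1)$ of multiplication by $\phi$; since multiplication by a holomorphic function already preserves $H^2(S^1)$ (it sends $z^k$ to $\sum_{n \geq 0} a_n z^{n+k}$, which has only nonnegative Fourier modes), the compression does nothing extra and $T_\phi$ acts simply as $f \mapsto \phi f$ on $H^2(S^1)$. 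The crucial structural consequence is the multiplicativity relation: for holomorphic $\phi$ and \emph{any} symbol $\psi$, one has $T_{\phi\psi} = T_\phi T_\psi$, because $\psi f$ already lies in the appropriate space before multiplying by $\phi$; dually, for $\psi$ holomorphic and any $\phi$, $T_{\phi\psi} = T_\phi T_\psi$ as well (here one uses that the Riesz projection commutes with multiplication by an anti-holomorphic function from the other side, or simply checks on monomials).

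Next I would apply this with $\psi = \phi^{-1}$. Since $\phi$ has no zeros on the closed unit disk, its reciprocal $\phi^{-1}$ is again holomorphic in a (possibly smaller) neighborhood of the closed disk, and in particular $\phi^{-1} \in C^\mu(S^1)$ with no poles or zeros on $S^1$, so $T_{\phi^{-1}}$ makes sense. Both $\phi$ and $\phi^{-1}$ are holomorphic, so the multiplicativity relation from the previous step gives $T_\phi T_{\phi^{-1}} = T_{\phi \cdot \phi^{-1}} = T_1 = I$ and likewise $T_{\phi^{-1}} T_\phi = T_{\phi^{-1} \cdot \phi} = I$. Hence $T_\phi$ is invertible with $T_\phi^{-1} = T_{\phi^{-1}}$, which is exactly the claim.

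The main obstacle — really the only point requiring care — is justifying the multiplicativity $T_{\phi\psi} = T_\phi T_\psi$ when one factor is holomorphic. The cleanest way is to argue at the level of the Riesz (Szegő) projection $P: L^2(S^1) \to H^2(S^1)$: writing $M_\phi$ for multiplication by $\phi$ on $L^2(S^1)$, one has $T_\phi = P M_\phi|_{H^2}$, and the identity $T_\phi T_\psi = T_{\phi\psi}$ amounts to $P M_\phi P M_\psi = P M_\phi M_\psi$ on $H^2(S^1)$, i.e. to $P M_\phi (I - P) M_\psi = 0$ on $H^2(S^1)$. When $\phi$ is holomorphic, $M_\phi$ maps $H^2(S^1)$ into itself, hence maps $(H^2(S^1))^\perp$ — wait, rather one uses that $M_\phi^* = M_{\bar\phi}$ maps $(H^2(S^1))^\perp$ into itself when $\phi$ is holomorphic, equivalently $M_\phi$ maps $H^2(S^1)$ to $H^2(S^1)$, so $P M_\phi (I-P) = P M_\phi - P M_\phi P$; applied to $(I-P)M_\psi f$ one checks the off-diagonal term vanishes because $(I-P)M_\psi f \perp H^2$ and $M_\phi$ preserves $H^2$ means $M_{\bar\phi}$ preserves $(H^2)^\perp$. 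I would simply verify the whole thing by a one-line computation on the monomial basis $\{z^k : k \geq 0\}$, which sidesteps any subtlety: with $\phi(z) = \sum_{n\geq 0} a_n z^n$ and $\psi(z) = \sum_{m \in \mathbb{Z}} b_m z^m$, both $T_\phi T_\psi z^k$ and $T_{\phi\psi} z^k$ equal $\sum_{j \geq 0} \bigl(\sum_{n+m = j-k,\, n \geq 0} a_n b_m\bigr) z^j$, because the constraint $n \geq 0$ is already forced once we demand the intermediate result lie in $H^2$ — the holomorphy of $\phi$ makes the projection after multiplying by $\psi$ harmless. This elementary check, together with the observation that $\phi^{-1}$ is holomorphic near the closed disk, completes the proof.
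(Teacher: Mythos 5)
Your argument is essentially the paper's: both reduce to the observation that $\phi$ and (because $\phi$ is zero-free on the closed disk) $\phi^{-1}$ have only nonnegative Fourier coefficients, so the two Toeplitz operators are plain multiplication operators on $H^2(S^1)$ and compose to $T_1 = I$. The conclusion is correct, but one intermediate statement you lean on is false as written: it is \emph{not} true that $T_{\phi\psi} = T_\phi T_\psi$ for holomorphic $\phi$ and arbitrary $\psi$. Take $\phi(z) = z$ and $\psi(z) = z^{-1}$: then $T_{\phi\psi} = I$ but $T_z T_{z^{-1}} 1 = T_z(P z^{-1}) = 0$. Your monomial computation overlooks this: $T_\phi T_\psi z^k$ carries the extra constraint $m + k \geq 0$ coming from the intermediate projection $P(\psi z^k)$, so its $z^j$-coefficient is $\sum_{n+m = j-k,\ n \geq 0,\ m \geq -k} a_n b_m$, which can differ from the coefficient of $T_{\phi\psi}z^k$. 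The correct (and standard) half of the multiplicativity is the one you call the dual: $T_\phi T_\psi = T_{\phi\psi}$ whenever the \emph{right} factor $\psi$ is holomorphic, since then $M_\psi$ preserves $H^2$ and the intermediate projection is the identity. That version alone suffices for your proof, because in both products $T_\phi T_{\phi^{-1}}$ and $T_{\phi^{-1}} T_\phi$ the right-hand symbol is holomorphic. With that one correction the proof is complete and matches the paper's.
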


\begin{proof}
The Fourier expansion of $\phi$ on the circle does not contain any negative powers of $z$, and because $\phi$ has no
zeros on the closed unit disk, the Fourier expansion of $\phi^{-1}$ on the circle also cannot contain any
negative powers of $z$.  A direct computation then shows that $T_\phi T_{\phi^{-1}} = T_{\phi^{-1}} T_\phi = I$.
\end{proof}

\begin{lemma}\label{inverse of linear}
Let $\alpha$ be a complex number with modulus strictly less than $1$.  Then $T_{(z - \alpha)^{-1}}T_{z - \alpha} = I$,
and for $f = \sum_{k=0}^\infty c_kz^k$ in the Hardy space $H^2(S^1)$, 
\[
T_{z - \alpha}T_{(z - \alpha)^{-1}}f = -\sum_{n=1}^\infty\alpha^nc_n + \sum_{k=1}^\infty c_kz^k.
\]
\end{lemma}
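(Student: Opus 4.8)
The plan is to prove both identities by a direct computation on the standard orthonormal basis $\{z^k : k\geq 0\}$ of $H^2(S^1)$, in the spirit of Lemma~\ref{inverse of holomorphic}. The first ingredient is the Laurent expansion of the symbol on the circle: since $|\alpha|<1$, for $|z|=1$ we have
\[
(z-\alpha)^{-1} = \frac{1}{z}\cdot\frac{1}{1-\alpha z^{-1}} = \sum_{m=1}^{\infty}\alpha^{m-1}z^{-m},
\]
a series that converges absolutely and uniformly on $S^1$ and involves only negative powers of $z$; in particular $T_{(z-\alpha)^{-1}}$ kills the constant function $1$. Multiplying this series against $f=\sum_{k\geq 0}c_kz^k$ and projecting onto the nonnegative powers yields the explicit formula
\[
T_{(z-\alpha)^{-1}}f = \sum_{j=0}^{\infty}\Big(\sum_{n=j+1}^{\infty}\alpha^{n-j-1}c_n\Big)z^j,
\]
where the inner sums converge because $\sum_n|\alpha^nc_n|<\infty$ by the Cauchy--Schwarz inequality (as $f\in H^2(S^1)$ and $(\alpha^n)\in\ell^2$); the same estimate legitimizes the rearrangement of the doubly-indexed series. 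On the other side, $T_{z-\alpha}=T_z-\alpha I$, where $T_z$ is the unilateral shift, so no projection correction is needed: $T_{z-\alpha}\big(\sum_{k\geq 0}c_kz^k\big)=\sum_{k\geq 0}c_kz^{k+1}-\alpha\sum_{k\geq 0}c_kz^k$.

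With these formulas in hand, both identities reduce to telescoping. For the first, I would feed the coefficient sequence of $T_{z-\alpha}f$ (namely $-\alpha c_0$ in degree $0$ and $c_{j-1}-\alpha c_j$ in degree $j\geq 1$) into the displayed formula for $T_{(z-\alpha)^{-1}}$; the resulting double sum in each degree $j$ collapses to $c_j$, so $T_{(z-\alpha)^{-1}}T_{z-\alpha}f=f$. For the second, I would apply $T_{z-\alpha}=T_z-\alpha I$ to $g:=T_{(z-\alpha)^{-1}}f=\sum_j e_jz^j$, where $e_j=\sum_{n\geq j+1}\alpha^{n-j-1}c_n$: the coefficient of $z^k$ in $T_zg-\alpha g$ is $e_{k-1}-\alpha e_k=c_k$ for $k\geq 1$ by telescoping, while the constant term is $-\alpha e_0=-\alpha\sum_{n\geq 1}\alpha^{n-1}c_n=-\sum_{n\geq 1}\alpha^nc_n$, which is exactly the extra term in the stated formula.

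There is no real obstacle here beyond careful bookkeeping: one must keep track of the index shifts in the telescoping sums and verify that every rearrangement of a doubly-indexed series is justified, which as noted above follows from absolute summability since $|\alpha|<1$. If a more conceptual rendering of the second identity is desired, one can instead note that, granting the first identity, $P:=T_{z-\alpha}T_{(z-\alpha)^{-1}}$ is idempotent and satisfies $T_{(z-\alpha)^{-1}}(I-P)=0$; since $\ker T_{(z-\alpha)^{-1}}=\mathbb{C}\cdot 1$ (immediate from the explicit formula above), the operator $I-P$ must have the form $f\mapsto\lambda(f)\,1$ for a linear functional $\lambda$, and pairing against the constant function $1$ evaluates $\lambda(f)=\sum_{n\geq 0}\alpha^nc_n=f(\alpha)$; rearranging $Pf=f-\lambda(f)\,1$ then gives the claimed formula.
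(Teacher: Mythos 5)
Your proposal is correct and follows essentially the same route as the paper: expand $(z-\alpha)^{-1}=\sum_{m\ge 1}\alpha^{m-1}z^{-m}$, derive the explicit coefficient formula for $T_{(z-\alpha)^{-1}}f$ (your $e_j=\sum_{n\ge j+1}\alpha^{n-j-1}c_n$ is exactly the paper's $\sum_{n\ge 1}\alpha^{n-1}c_{j+n}$), and telescope after applying $T_z-\alpha I$. The only cosmetic differences are that the paper verifies the first identity at the operator level from $T_{z^{-1}}T_z=I$ rather than coefficientwise, and your closing idempotent/kernel argument is a pleasant optional alternative not present in the paper.
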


\begin{proof}
Because $|\alpha| < 1$,
\[
\frac{1}{z - \alpha} = \frac{\frac{1}{z}}{1 - \frac{\alpha}{z}} = 
\sum_{n=1}^\infty \alpha^{n-1}z^{-n},
\]
and thus
\[
T_{(z - \alpha)^{-1}} = \sum_{n=1}^\infty \alpha^{n-1}T_{z^{-n}}.
\]
The fact that $T_{z^{-1}}T_z = I$ implies the first equation in the statement of the lemma.
Let $P$ denote the orthogonal projection of $L^2(S^1)$ onto $H^2(S^1)$. 
\begin{align*}
T_{(z - \alpha)^{-1}}f &= \left(\sum_{n=1}^\infty \alpha^{n-1}T_{z^{-n}}\right)\left(\sum_{k=0}^\infty c_kz^k\right) \\
&= \sum_{n=1}^\infty \sum_{k=0}^\infty \alpha^{n-1}c_k P z^{k - n} \\
&= \sum_{n=1}^\infty \sum_{k=n}^\infty \alpha^{n-1}c_k z^{k - n}.
\end{align*}
Set $\ell = k - n$.  Then we obtain
\[
T_{(z - \alpha)^{-1}}f =  \sum_{n=1}^\infty \sum_{\ell=0}^\infty \alpha^{n-1}c_{\ell+n} z^\ell = 
\sum_{\ell=0}^\infty\left(\sum_{n=1}^\infty \alpha^{n-1}c_{\ell+n}\right)z^\ell,
\]
and so
\begin{align*}
T_{z - \alpha}T_{(z - \alpha)^{-1}}f &= \left(T_z - \alpha I\right)
\sum_{\ell=0}^\infty\left(\sum_{n=1}^\infty \alpha^{n-1}c_{\ell+n}\right)z^\ell \\
&= \sum_{\ell=0}^\infty\left(\sum_{n=1}^\infty \alpha^{n-1}c_{\ell+n}\right)z^{\ell+1} -
\sum_{\ell=0}^\infty\left(\sum_{n=1}^\infty \alpha^n c_{\ell+n}\right)z^\ell \\
&= \sum_{\ell=1}^\infty\left(\sum_{n=1}^\infty \alpha^{n-1}c_{\ell+n-1}\right)z^\ell -
\sum_{\ell=0}^\infty\left(\sum_{n=1}^\infty \alpha^n c_{\ell+n}\right)z^\ell \\
&= -\sum_{n=1}^\infty \alpha^n c_n + 
\sum_{\ell=1}^\infty\left(\sum_{n=1}^\infty \left(\alpha^{n-1}c_{\ell+n-1} - \alpha^n c_{\ell+n}\right)\right)z^\ell.
\end{align*}
But for $\ell \geq 1$,
\begin{align*}
\sum_{n=1}^\infty\left(\alpha^{n-1}c_{\ell+n-1} - \alpha^n c_{\ell+n}\right) &=
\sum_{n=1}^\infty\alpha^{n-1}c_{\ell+n-1} - \sum_{n=1}^\infty\alpha^n c_{\ell+n} \\
&= \sum_{n=0}^\infty\alpha^{n}c_{\ell+n} - \sum_{n=1}^\infty\alpha^n c_{\ell+n} \\
&= c_\ell,
\end{align*}
whence
\[
T_{z - \alpha}T_{(z - \alpha)^{-1}}f = -\sum_{n=1}^\infty \alpha^nc_n + \sum_{\ell=1}^\infty c_\ell z^\ell = 
-\sum_{n=1}^\infty \alpha^nc_n + \sum_{k=1}^\infty c_k z^k.
\]
\end{proof}

\begin{proposition}\label{both holomorphic}
Suppose $\phi$ and $\psi$ are holomorphic functions in a neighborhood of the unit disk and have no zeros on
the closed unit disk.  Then $\delta(\phi, \psi) = \pi(\phi, \psi)$.
\end{proposition}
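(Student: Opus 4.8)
The plan is to evaluate both sides and check that each equals $1$. For the right-hand side this is immediate from the definitions: since $\phi$ and $\psi$ are holomorphic and zero-free on the closed unit disk, we have $v(\phi, z) = v(\psi, z) = 0$ for every $z \in \mathbb{D}$, so the limit defining $(\phi, \psi)_z$ is the constant expression $(-1)^{0}\,\psi(w)^{0}/\phi(w)^{0} \equiv 1$. Hence $(\phi, \psi)_z = 1$ for all $z$ and $\pi(\phi, \psi) = \prod_{z \in \mathbb{D}} 1^{-1} = 1$.

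The substance of the argument is the computation of $\delta(\phi, \psi)$. First I would use Lemma \ref{inverse of holomorphic} to replace $T_\phi^{-1}$ by $T_{\phi^{-1}}$ and $T_\psi^{-1}$ by $T_{\psi^{-1}}$ in the explicit lifts produced in Lemma \ref{existence of lifts}. Because $T_{\phi^{-1}} T_\phi = T_\phi T_{\phi^{-1}} = I$, the off-diagonal entries of $R_\phi$ vanish and the $(1,1)$ entry $2T_\phi - T_\phi T_{\phi^{-1}} T_\phi$ reduces to $T_\phi$; thus $R_\phi = \operatorname{diag}(T_\phi, T_{\phi^{-1}}, I)$, and in the same way $S_\psi = \operatorname{diag}(T_\psi, I, T_{\psi^{-1}})$. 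Lemma \ref{delta well defined} guarantees that using these particular lifts computes $\delta(\phi, \psi)$ correctly.

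Next I would simply multiply the resulting diagonal matrices, obtaining $R_\phi S_\psi R_\phi^{-1} S_\psi^{-1} = \operatorname{diag}\bigl(T_\phi T_\psi T_{\phi^{-1}} T_{\psi^{-1}},\, I,\, I\bigr)$. The final ingredient is that a Toeplitz operator with holomorphic symbol is the restriction to $H^2(S^1)$ of the corresponding multiplication operator, so operators of this type compose multiplicatively on symbols; equivalently, $T_\psi$ and $T_{\phi^{-1}}$ commute and $T_\phi T_{\phi^{-1}} = T_\psi T_{\psi^{-1}} = I$ by Lemma \ref{inverse of holomorphic}. Therefore $T_\phi T_\psi T_{\phi^{-1}} T_{\psi^{-1}} = T_{\phi \psi \phi^{-1} \psi^{-1}} = T_1 = I$, so $R_\phi S_\psi R_\phi^{-1} S_\psi^{-1} = I$ and $\delta(\phi, \psi) = \det(I) = 1 = \pi(\phi, \psi)$.

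I do not anticipate a real obstacle here: the only points requiring care are the bookkeeping in collapsing the lift matrices and the observation that analytic Toeplitz operators multiply without a trace-class correction term — which is just the mechanism behind Lemma \ref{inverse of holomorphic}, applied to a product rather than to a single inverse. This is the base case from which the general statement will later be obtained by the multiplicative reductions of Propositions \ref{properties of pi} and \ref{properties of delta}.
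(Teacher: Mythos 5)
Your proposal is correct and follows essentially the same route as the paper: both reduce the lifts to the diagonal matrices $\operatorname{diag}(T_\phi, T_{\phi^{-1}}, I)$ and $\operatorname{diag}(T_\psi, I, T_{\psi^{-1}})$ via Lemma \ref{inverse of holomorphic} and then observe that the analytic Toeplitz operators involved commute, so the commutator is the identity and the determinant is $1 = \pi(\phi,\psi)$.
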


\begin{proof}
We see immediately from the definition of $\pi$ that $\pi(\phi, \psi) = 1$.  Lemma \ref{inverse of holomorphic}
implies that the matrices $R_\phi$ and $S_\psi$ from Lemma \ref{existence of lifts} simplify to
\[
R_\phi = \begin{pmatrix} T_\phi & 0 & 0 \\ 0 & {T_\phi}^{-1} & 0 \\ 0 & 0 & I \end{pmatrix} \qquad
\text{and} \qquad
S_\psi = \begin{pmatrix} T_\psi & 0 & 0 \\ 0 & I & 0 \\ 0 & 0 & {T_\psi}^{-1} \end{pmatrix}.
\]
The Toeplitz operators $T_\phi$ and $T_\psi$ commute, and thus 
\[
\delta(\phi, \psi) = \det(R_\phi S_\psi R_\phi^{-1} S_\psi^{-1}) = \det I = 1.
\]
\end{proof}

\begin{proposition}\label{holomorphic and linear}
Suppose $\phi$ is a holomorphic functions in a neighborhood of the unit disk and has no zeros on
the closed unit disk, and suppose $|\alpha| < 1$.  Then 
$\delta(\phi, z - \alpha) = \pi(\phi, z - \alpha)$.
\end{proposition}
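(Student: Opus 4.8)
The plan is to compute both sides directly and check that they agree. For the right-hand side: since $\phi$ is holomorphic and zero-free on the closed disk, $v(\phi, z) = 0$ for every $z \in \mathbb{D}$, while $z - \alpha$ has exactly one zero in $\mathbb{D}$, a simple one at $z = \alpha$. Thus the only point contributing a tame symbol other than $1$ is $z = \alpha$, where $(\phi, z - \alpha)_\alpha = \lim_{w \to \alpha}\phi(w)^{-1} = \phi(\alpha)^{-1}$, so $\pi(\phi, z - \alpha) = \phi(\alpha)$. It therefore suffices to show $\delta(\phi, z - \alpha) = \phi(\alpha)$.

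For the left-hand side I would use the explicit lifts from Lemma \ref{existence of lifts}, simplified by the two preceding lemmas. Since $\phi$ is holomorphic and zero-free on the closed disk, Lemma \ref{inverse of holomorphic} collapses $R_\phi$ to $\mathrm{diag}(T_\phi, T_\phi^{-1}, I)$ and $R_\phi^{-1}$ to $\mathrm{diag}(T_\phi^{-1}, T_\phi, I)$. Writing $T = T_{z-\alpha}$ and $T' = T_{(z-\alpha)^{-1}}$, Lemma \ref{inverse of linear} gives $T'T = I$, so, setting $E := TT' - I$, the lift $S_{z-\alpha}$ collapses to the upper triangular matrix with diagonal $(T, I, T')$ and sole off-diagonal entry $E$ in position $(1,3)$, while $S_{z-\alpha}^{-1}$ collapses to the lower triangular matrix with diagonal $(T', I, T)$ and sole off-diagonal entry $E$ in position $(3,1)$. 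Lemma \ref{inverse of linear} also identifies $E$ with the rank-one operator $f \mapsto -f(\alpha)\cdot 1$, from which I would record three facts: $E^2 = -E$ (because $E$ sends a constant $c$ to $-c$); $ET = 0$ (because $(z-\alpha)f$ vanishes at $\alpha$); and $T_\phi\, T\, T_{\phi^{-1}} = T_{z-\alpha} = T$ (because Toeplitz operators with symbols holomorphic on the closed disk compose symbolwise, using Lemma \ref{inverse of holomorphic}).

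I would then expand the product $R_\phi\, S_{z-\alpha}\, R_\phi^{-1}\, S_{z-\alpha}^{-1}$. The three facts make the $(1,3)$ entry $T_\phi E T$ vanish and, via $E^2 = -E$, turn the $(1,1)$ entry into $I + (I - T_\phi)E$; the middle block is $I$, and the only surviving off-diagonal entry lies in position $(3,1)$. Hence the product is block lower triangular with diagonal blocks $I + (I - T_\phi)E$, $I$, $I$, and therefore $\delta(\phi, z - \alpha) = \det\bigl(I + (I - T_\phi)E\bigr)$. Finally, since $T_\phi 1 = \phi$, the operator $(I - T_\phi)E$ is the rank-one operator $f \mapsto f(\alpha)\,(\phi - 1)$; for a rank-one operator $A$ of the form $Af = \lambda(f)\,w$, where $\lambda$ is a bounded linear functional, one has $\det(I + A) = 1 + \lambda(w)$, and here $\lambda(w) = (\phi - 1)(\alpha) = \phi(\alpha) - 1$, so $\det\bigl(I + (I - T_\phi)E\bigr) = \phi(\alpha) = \pi(\phi, z - \alpha)$.

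I expect the only real obstacle to be bookkeeping in the $3 \times 3$ block multiplication: one must apply $ET = 0$ and $T_\phi\, T\, T_{\phi^{-1}} = T$ at the right places so that the off-diagonal blocks vanish or drop out of the determinant. Once the product is recognized as block triangular, the rest is the standard rank-one Fredholm determinant computation.
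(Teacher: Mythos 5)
Your proposal is correct, and while it sets up the computation exactly as the paper does (same lifts from Lemma \ref{existence of lifts}, same simplifications via Lemmas \ref{inverse of holomorphic} and \ref{inverse of linear}, arriving at the same operator, since $T_{\psi}T_{\psi^{-1}} + T_\phi(I - T_\psi T_{\psi^{-1}}) = I + (I-T_\phi)E$ with $E = T_\psi T_{\psi^{-1}} - I$), it evaluates the final determinant by a genuinely different and cleaner route. The paper expands everything in Fourier coefficients, assumes an eigenvector with eigenvalue $\lambda \neq 1$, and grinds through a system of equations to conclude that the only possible such eigenvalue is $\phi(\alpha)$; this forces a separate disposal of the constant case (the argument fixes $j$ with $a_j \neq 0$) and leans implicitly on the determinant being the product of eigenvalues. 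You instead observe from Lemma \ref{inverse of linear} that $E$ is the rank-one operator $f \mapsto -f(\alpha)\cdot 1$, record the identities $E^2 = -E$, $ET = 0$, and $T_\phi T T_{\phi^{-1}} = T$, and then apply the standard formula $\det(I + A) = 1 + \lambda(w)$ for a rank-one perturbation $Af = \lambda(f)w$, getting $1 + (\phi-1)(\alpha) = \phi(\alpha)$ in one line. All the ingredients check out: point evaluation at $\alpha$ is bounded on $H^2$, the identity $(Tf)(\alpha) = 0$ gives $ET = 0$, and the block-lower-triangular structure of the final product (with trace-class off-diagonal entries) justifies taking the determinant of the $(1,1)$ block alone. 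Your version buys uniformity (no case split on $\phi$ constant) and conceptual clarity, at the modest cost of invoking the rank-one Fredholm determinant formula rather than working purely with coefficients.
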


\begin{proof} 
If $\phi$ is constant, the proposition follows immediately, so suppose otherwise.
The function $\phi$ has no poles or zeros in the disk, while $\psi$ has a simple zero at $\alpha$.  The
tame symbol at $\alpha$ is
\[
(\phi(z), z - \alpha)_\alpha = \lim_{w \to \alpha} (-1)^{(0)(1)}\frac{(w - \alpha)^0}{\phi(w)^1} =
\frac{1}{\phi(\alpha)},
\]
and thus $\pi(\phi, \psi) = \phi(\alpha)$.

To simplify notation, set $\psi(z) = z - \alpha$.  Lemmas \ref{existence of lifts}, \ref{inverse of holomorphic},
and \ref{inverse of linear} give us
\[
R_\phi = \begin{pmatrix}
T_\phi  & 0 & 0 \\
0 & {T_\phi}^{-1} & 0 \\
0 & 0 & I
\end{pmatrix} \qquad \text{and} \qquad
S_\psi= \begin{pmatrix}
T_\psi & 0 &T_\psi T_{\psi^{-1}} - I \\
0 & I & 0 \\
0 & 0 & T_{\psi^{-1}} \\
\end{pmatrix},
\]
and computing $R_\phi S_\phi R_\phi^{-1} S_\psi^{-1}$ yields
\begin{align*}
\delta(\phi, \psi) &= 
\det \begin{pmatrix}
T_\phi T_\psi T_{\phi^{-1}} T_{\psi^{-1}} + T_\phi(T_\psi T_{\psi^{-1}} - I)^2 & 0 & 0 \\
0 & I & 0 \\
0 & 0 & I
\end{pmatrix} \\
&= \det\left(T_\phi T_\psi T_{\phi^{-1}} T_{\psi^{-1}} + T_\phi(T_\psi T_{\psi^{-1}} - I)^2\right).
\end{align*}
Note that
\[
(T_\psi T_{\psi^{-1}} - I)^2 = T_\psi T_{\psi^{-1}}T_\psi T_{\psi^{-1}} - 2T_\psi T_{\psi^{-1}} + I
= I - T_\psi T_{\psi^{-1}}.
\]
In addition, the operators $T_\psi$ and $T_{\phi^{-1}}$ commute, so 
\[
\delta(\phi, \psi) = \det\left(T_\psi T_{\psi^{-1}} + T_\phi(I - T_\psi T_{\psi^{-1}})\right).
\]
Write 
\[
\phi(z) = \sum_{k=0}^\infty a_kz^k. 
\]
Then 
\[
T_\phi =  \sum_{k=0}^\infty a_kT_{z^k}.  
\]
Take $f = \sum_{k=0}^\infty c_kz^k$ in the Hardy space $H^2(S^1)$.  Then a straightforward
computation using Lemma \ref{inverse of linear} yields
\[
\left(T_\psi T_{\psi^{-1}} + T_\phi(I - T_\psi T_{\psi^{-1}})\right)\! f = 
-\sum_{n=1}^\infty \alpha^nc_n + \sum_{k=1}^\infty c_kz^k + 
\sum_{k=0}^\infty a_k\left(\sum_{n=0}^\infty \alpha^n c_n\right)\! z^k.
\]
Suppose that $f$ is an eigenvector with eigenvalue $\lambda$.  Then for all $k \geq 1$ we have
\[
\lambda c_k = c_k + a_k\sum_{n=0}^\infty \alpha^n c_n,
\]
and therefore for any $j, k \geq 1$, we obtain the system of equations
\begin{align*}
\lambda a_j c_k &= a_j c_k + a_j a_k\sum_{n=0}^\infty \alpha^n c_n \\
\lambda a_k c_j &= a_k c_j + a_j a_k\sum_{n=0}^\infty \alpha^n c_n \\
\end{align*}
which gives us 
\[
\lambda(a_j c_k - a_k c_j) = a_j c_k - a_k c_j.
\]
Therefore, if $\lambda \neq 1$, we must have $a_j c_k - a_k c_j = 0$ for all $j, k \geq 1$.  For the
remainder of the proof, fix a value of $j$ with the property that $a_j \neq 0$.  Then
\[
c_k = \left(\frac{c_j}{a_j}\right)a_k
\]
for all $k \geq 1$, and so if our operator has an eigenvector with eigenvalue $\lambda \neq 1$, it must have
an eigenvector with the feature that $c_k = a_k$ for all $k \geq 1$.  If we plug this information into
the equation 
\[
\lambda c_j = c_j + a_j\sum_{n=0}^\infty \alpha^n c_n
\]
we obtain 
\[
\lambda a_j = a_j + a_j\left(c_0 + \sum_{n=1}^\infty \alpha^n a_n\right),
\]
whence
\[
\lambda = 1 + c_0 + \sum_{n=1}^\infty \alpha^n a_n.
\]
Looking at the constant term of $f$, we also have the equations
\begin{align*}
\lambda c_0 &= -\sum_{n=1}^\infty \alpha^n c_n + a_0\sum_{n=0}^\infty \alpha^n c_n \\
&= -\sum_{n=1}^\infty \alpha^n c_n + a_0c_0 + \sum_{n=1}^\infty \alpha^n a_n, \\
\end{align*}
which, when combined with our expression for $\lambda$, give us
\[
\left(1 + c_0 + \sum_{n=1}^\infty \alpha^n a_n \right)c_0 = 
-\sum_{n=1}^\infty \alpha^n a_n + a_0c_0 + a_0\sum_{n=1}^\infty \alpha^na_n.
\]
This last equation can be rewritten as
\[
\left(c_0 + 1 - a_0\right)\left(c_0 + \sum_{n=1}^\infty \alpha^na_n\right) = 0.
\]
If $c_0 = - \sum_{n=1}^\infty \alpha^na_n$, then $\lambda = 1$.  If $c_0 = a_0 - 1$, then
\[
\lambda = 1 + a_0 - 1 + \sum_{n=1}^\infty \alpha^n a_n = \phi(\alpha),
\]
from which we infer that $\delta(\phi, z - \alpha) = \phi(\alpha)$.
\end{proof}

\begin{proposition}\label{alpha < 1 beta < 1}
Let $\phi(z) = z - \alpha$ and $\psi(z) = z - \beta$ with $|\alpha| < 1$ and $|\beta| < 1$.  Then
$\delta(\phi, \psi) = \pi(\phi, \psi)$.
\end{proposition}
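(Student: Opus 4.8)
The plan is to evaluate both sides of the asserted equality and check that each equals $-1$. Computing $\pi(\phi,\psi)$ is immediate: each of $\phi=z-\alpha$ and $\psi=z-\beta$ has exactly one simple zero in $\mathbb{D}$ and no poles there. When $\alpha\ne\beta$ the only points contributing to the product are $\alpha$ and $\beta$, where $(\phi,\psi)_\alpha=\psi(\alpha)=\alpha-\beta$ and $(\phi,\psi)_\beta=\phi(\beta)^{-1}=(\beta-\alpha)^{-1}$, so $\pi(\phi,\psi)=(\alpha-\beta)^{-1}(\beta-\alpha)=-1$. When $\alpha=\beta$ only the point $\alpha$ contributes, with $v(\phi,\alpha)=v(\psi,\alpha)=1$, giving $(\phi,\psi)_\alpha=-1$ and again $\pi(\phi,\psi)=-1$.

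For $\delta(\phi,\psi)$ I would argue operator-theoretically, in the spirit of the proof of Proposition \ref{holomorphic and linear}. By Lemma \ref{inverse of linear}, $T_{(z-\alpha)^{-1}}T_{z-\alpha}=I$ and similarly for $\beta$, so the lifts from Lemma \ref{existence of lifts} simplify. Writing $A=T_{z-\alpha}$, $A'=T_{(z-\alpha)^{-1}}$, $E=AA'-I$, and $B,B',F$ for the operators built analogously from $z-\beta$, one gets
\[
R_\phi=\begin{pmatrix}A&E&0\\0&A'&0\\0&0&I\end{pmatrix},\qquad
S_\psi=\begin{pmatrix}B&0&F\\0&I&0\\0&0&B'\end{pmatrix},
\]
with the inverses simplifying in the same way. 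The central step is to multiply out $R_\phi S_\psi R_\phi^{-1}S_\psi^{-1}$. Using $A'A=I=B'B$ and its consequences $E^2=-E$, $EA=0$, $A'E=0$, $F^2=-F$, $B'F=0$, together with $AB=BA$, this product collapses to a block upper triangular matrix whose diagonal blocks are $ABA'B'-EB'-AF$, $I$, and $I$; hence $\delta(\phi,\psi)=\det(ABA'B'-EB'-AF)$. Since $ABA'B'=B(AA')B'=B(I+E)B'=I+F+BEB'$, the operator $M:=ABA'B'-EB'-AF$ equals $I+N$ with $N=F+(B-I)EB'-AF$, and every summand of $N$ has range inside the two-dimensional subspace $V$ spanned by $1$ and $z$: the operators $E$ and $F$ have range $\mathbb{C}\cdot 1$, while $A$ and $B-I$ each carry $1$ into $V$. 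As $V$ is then invariant under $M$, the determinant of $M$ equals the determinant of the $2\times 2$ matrix of $M|_V$ in the basis $\{1,z\}$. Using $T_{(z-\beta)^{-1}}1=0$ and $T_{(z-\beta)^{-1}}z=1$ from Lemma \ref{inverse of linear}, one computes $M(1)=z-\alpha$ and $M(z)=1-\alpha\beta+\beta z$, so this matrix is
\[
\begin{pmatrix}-\alpha&1-\alpha\beta\\1&\beta\end{pmatrix},
\]
whose determinant is $-\alpha\beta-(1-\alpha\beta)=-1$. Hence $\delta(\phi,\psi)=-1=\pi(\phi,\psi)$.

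The step I expect to be the main obstacle is the block-matrix bookkeeping: one must track carefully which of $A,A',B,B',E,F$ commute and which products vanish in order to see that $R_\phi S_\psi R_\phi^{-1}S_\psi^{-1}$ reduces to a single nontrivial block, and then observe that the resulting perturbation of the identity has two-dimensional range, so that an infinite-dimensional determinant becomes a $2\times 2$ one. A slicker but less self-contained alternative, available because $\delta$ factors through $K_2(C^\mu(S^1))$ (so that $\delta(u,1-u)=1$ whenever $u$ and $1-u$ are units), is to apply this relation to $u=(z-\alpha)/(\beta-\alpha)$ when $\alpha\ne\beta$ and then use bimultiplicativity together with Proposition \ref{holomorphic and linear} to isolate $\delta(z-\alpha,z-\beta)$; but this requires a separate small argument when $\alpha=\beta$ and relies more heavily on the $K$-theory, so I would give the direct computation.
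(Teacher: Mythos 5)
Your proof is correct and follows essentially the same route as the paper: both reduce $\delta(\phi,\psi)$ to the determinant of the same operator $M=ABA'B'-EB'-AF$ via the simplified lifts, and both exploit that $M$ acts as the identity above degree one. Your finish --- observing that $M-I$ has range in $\operatorname{span}\{1,z\}$ and taking the $2\times 2$ determinant of $M$ restricted to that invariant subspace --- is a slightly cleaner packaging of the paper's eigenvalue analysis (which finds that any eigenvalue $\lambda\neq 1$ satisfies $\lambda^2+(\alpha-\beta)\lambda-1=0$ and multiplies the roots), but the substance is the same.
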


\begin{proof} 
The functions $\phi$ and $\psi$ have simple zeros at $\alpha$ and $\beta$ respectively.  If 
$\alpha \neq \beta$, then  
\[
(z - \alpha, z - \beta)_\alpha = \lim_{w \to \alpha} (-1)^{(1)(0)}\frac{(w - \beta)^1}{(w - \alpha)^0} =
\alpha - \beta
\]
and
\[
(z - \alpha, z - \beta)_\beta = \lim_{w \to \beta} (-1)^{(0)(1)}\frac{(w - \beta)^0}{(w - \alpha)^1} =
\frac{1}{\beta - \alpha},
\]
whence $\pi(\phi, \psi) = -1$.  On the other hand, if $\alpha = \beta$, 
\[
(z - \alpha, z - \alpha)_\alpha = \lim_{w \to \alpha} (-1)^{(1)(1)}\frac{(w - \alpha)^1}{(w - \alpha)^1} = - 1,
\]
and so $\pi(\phi, \psi) = -1$ regardless if $\alpha$ and $\beta$ are equal or not.

A computation similar to that done in Proposition \ref{holomorphic and linear} shows that
\[
\delta(\phi, \psi) = \det\left(T_\phi T_\psi T_{\phi^{-1}} T_{\psi^{-1}} + 
(I - T_\phi T_{\phi^{-1}})T_{\psi^{-1}} + T_\phi(I - T_\psi T_{\psi^{-1}})\right).  
\]

Take $f = \sum_{k=0}^\infty c_kz^k$ in the Hardy space $H^2(S^1)$.  Then a somewhat involved computation yields
\begin{multline*}
\left(T_\phi T_\psi T_{\phi^{-1}} T_{\psi^{-1}} + 
(I - T_\phi T_{\phi^{-1}})T_{\psi^{-1}} + T_\phi(I - T_\psi T_{\psi^{-1}})\right)\! f = \\
- \alpha c_0 + \sum_{m=1}^\infty \sum_{n=1}^\infty \alpha^m(\beta^n - \beta^{n-1})c_{m+n} 
+ \sum_{n=1}^\infty (\beta^{n-1} - \alpha\beta^n)c_n \\
+ \left(c_0  - \sum_{m=1}^\infty \alpha^m c_{m+1} + \sum_{n=1}^\infty \beta^n c_n
- \sum_{m=1}^\infty \sum_{n=1}^\infty \alpha^{m-1} \beta^n c_{m+n}\right){\hskip -3pt}z 
+ \sum_{k=2}^\infty c_k z^k.
\end{multline*}
Therefore if $f$ is an eigenvector with eigenvalue $\lambda \neq 1$, then $c_k = 0$ for $k \geq 2$.  In this case,
\[
-\alpha c_0 + (1 - \alpha\beta) c_1 + (c_0 + \beta c_1)z = \lambda c_0 + \lambda c_1 z,
\]
which gives us the equations
\begin{align*}
(\lambda + \alpha)c_0 + (\alpha\beta - 1)c_1 &= 0 \\
-c_0 + (\lambda - \beta)c_1 &=0.
\end{align*}
For this system to have a nontrivial solution, we must have
\[
0 = (\lambda + \alpha)(\lambda - \beta) + \alpha\beta - 1 = \lambda^2 + (\alpha - \beta)\lambda - 1.
\]
Thus the product of the roots of the quadratic equation is $-1$, as desired.
\end{proof}

\begin{theorem}\label{big theorem}
For all $\phi$ and $\psi$ in $(C^\mu(S^1))^*$, we have $\delta(\phi, \psi) = \pi(\phi, \psi)$.  
\end{theorem}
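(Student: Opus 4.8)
The plan is to exploit the structural results already in hand: by Propositions \ref{properties of pi} and \ref{properties of delta}, both $\pi$ and $\delta$ are bimultiplicative and skew-symmetric on $(C^\mu(S^1))^*$. Consequently the collection of pairs $(\phi, \psi)$ for which $\delta(\phi, \psi) = \pi(\phi, \psi)$ is closed under multiplication and inversion in each variable separately, so it suffices to verify the identity when $\phi$ and $\psi$ each range over a generating set for the group $(C^\mu(S^1))^*$.

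First I would pin down a convenient generating set. Given $\phi$ in $(C^\mu(S^1))^*$, its meromorphic extension has only finitely many zeros and poles in the compact set $\overline{\mathbb{D}}$, say at points $\alpha_1, \dots, \alpha_k$ in $\mathbb{D}$ with orders $n_1, \dots, n_k$ in $\mathbb{Z} \setminus \{0\}$; by hypothesis none lies on $S^1$. After shrinking the neighborhood of $\overline{\mathbb{D}}$ on which $\phi$ is defined so as to discard any zeros or poles of $\phi$ lying outside $\overline{\mathbb{D}}$, the function $\phi_0(z) = \phi(z) \prod_{i=1}^k (z - \alpha_i)^{-n_i}$ is holomorphic and nowhere zero on a neighborhood of $\overline{\mathbb{D}}$. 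Thus $\phi = \phi_0 \prod_{i=1}^k (z - \alpha_i)^{n_i}$, which shows that $(C^\mu(S^1))^*$ is generated by the functions that are holomorphic and zero-free on a neighborhood of $\overline{\mathbb{D}}$ together with the linear polynomials $z - \alpha$ with $|\alpha| < 1$.

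Then I would assemble the theorem from the cases already proved. Bimultiplicativity and skew-symmetry reduce the claim to the three types of generator pairs: (a) $\phi$ and $\psi$ both holomorphic and zero-free on a neighborhood of $\overline{\mathbb{D}}$, which is Proposition \ref{both holomorphic}; (b) one of $\phi, \psi$ holomorphic and zero-free and the other equal to $z - \alpha$ with $|\alpha| < 1$, which is Proposition \ref{holomorphic and linear}, the case with the roles reversed being handled by part (iii) of Propositions \ref{properties of pi} and \ref{properties of delta}; and (c) $\phi = z - \alpha$ and $\psi = z - \beta$ with $|\alpha|, |\beta| < 1$, which is Proposition \ref{alpha < 1 beta < 1}. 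Here one uses that $\delta(1, \psi) = \pi(1, \psi) = 1$ and that $\delta(\phi^{-1}, \psi) = \delta(\phi, \psi)^{-1}$, $\pi(\phi^{-1}, \psi) = \pi(\phi, \psi)^{-1}$ (immediate from part (i) of the two propositions together with the identity just noted), so that the factorization $\phi = \phi_0 \prod (z - \alpha_i)^{n_i}$ with possibly negative exponents, and the analogous factorization of $\psi$, cause no difficulty.

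I expect the only point requiring genuine care to be the verification that this reduction is legitimate — namely that an identity which is bimultiplicative and skew-symmetric in nature really does propagate from a generating set to all elements, inverses included — together with the elementary bookkeeping in the finiteness-and-shrinking argument used to factor a general $\phi$. All of the analytic content of the theorem is already contained in Propositions \ref{both holomorphic}, \ref{holomorphic and linear}, and \ref{alpha < 1 beta < 1}; the argument outlined here is purely formal.
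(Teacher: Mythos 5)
Your proof is correct and takes essentially the same route as the paper's: factor each function as a holomorphic, zero-free function times linear factors $z-\alpha$ with $|\alpha|<1$, then use the bimultiplicativity and antisymmetry of $\delta$ and $\pi$ to reduce to the three already-proved generator cases. If anything, your write-up is more complete than the paper's two-sentence argument, which neglects to cite Proposition \ref{alpha < 1 beta < 1} (the case of two linear factors) even though it is needed; your case (c) supplies it explicitly.
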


\begin{proof}
Every meromorphic function defined on a neighborhood of the closed unit disk can be written as a 
rational function times a function that is holomorphic and has no zeros on the closed unit disk.
Therefore the desired result follows from Lemmas \ref{properties of pi} and \ref{properties of delta},
Propositions \ref{both holomorphic} and \ref{holomorphic and linear}, and the Fundamental Theorem of Algebra.
\end{proof}

We now apply our results to the compute determinants of multiplicative commutators in $\mathcal{T}^\mu$.  

\begin{lemma}
Suppose that $A$ in $\mathcal{T}^\mu$ is invertible.  Then $A^{-1}$ is in $\mathcal{T}^\mu$.
\end{lemma}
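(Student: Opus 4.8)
The plan is to reduce $A^{-1}$ to the form ``identity plus trace class'', for which inverse-closedness is elementary, by peeling off a Toeplitz factor carrying the symbol of $A$. Write $A = T_\phi + L$ with $\phi \in C^\mu(S^1)$ and $L \in \mathcal{L}^1$. Since $A$ is invertible it is Fredholm, and since $L$ is compact so is $T_\phi = A - L$, with the same index, $0$. For a continuous symbol, Fredholmness of $T_\phi$ is equivalent to invertibility of $\phi$ in $C(S^1)$; hence the meromorphic extension of $\phi$ has no zeros on $S^1$, and it has no poles there either because $\phi$ is finite-valued on $S^1$, so $\phi \in (C^\mu(S^1))^*$. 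Moreover the index of $T_\phi$ equals minus the winding number of $\phi$, so $\phi$ has winding number $0$. Since $\phi$ is nowhere zero on $S^1$ and not identically zero, its meromorphic extension has only finitely many zeros and poles in $\overline{\mathbb{D}}$, all in the open disk; inverting it shows $\phi^{-1} \in (C^\mu(S^1))^*$, necessarily with winding number $0$ as well. By \cite{Douglas}, Corollary 7.27 (the criterion already invoked in the introduction), the operators $T_\phi$ and $T_{\phi^{-1}}$ are both invertible.

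Next I would set $B = T_{\phi^{-1}} A$. This lies in $\mathcal{T}^\mu$ because $\mathcal{T}^\mu$ is closed under multiplication, and $\sigma(B) = \phi^{-1}\phi = 1$, so exactness of $0 \to \mathcal{L}^1 \to \mathcal{T}^\mu \to C^\mu(S^1) \to 0$ gives $B = I + K$ with $K \in \mathcal{L}^1$. As a product of the invertible operators $T_{\phi^{-1}}$ and $A$, the operator $B = I + K$ is invertible in $B(H^2(S^1))$, and the identity $(I+K)^{-1} = I - K(I+K)^{-1}$ exhibits $(I+K)^{-1}$ as $I$ plus a trace class operator (a trace class operator composed with a bounded one), hence as an element of $\mathcal{T}^\mu$. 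Finally, from $T_{\phi^{-1}} A = I + K$ we obtain $A^{-1} = (I+K)^{-1} T_{\phi^{-1}}$, a product of two elements of $\mathcal{T}^\mu$, so $A^{-1} \in \mathcal{T}^\mu$.

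The argument uses nothing beyond standard Toeplitz theory — Fredholmness of $T_\phi$ is equivalent to invertibility of the symbol, the index computes minus the winding number, and invertible symbols of winding number zero give invertible Toeplitz operators — together with the cited invertibility criterion, after which everything is bookkeeping with the exact sequence; so I do not expect a serious obstacle. The one point needing a little care is checking that $\phi^{-1}$ really belongs to $C^\mu(S^1)$, i.e.\ that its meromorphic extension is defined on a neighborhood of the closed disk, which is exactly where one uses that $\phi$ is not identically zero and hence has isolated — and by compactness finitely many — zeros and poles in $\overline{\mathbb{D}}$, none of them on the circle.
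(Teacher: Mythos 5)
Your proof is correct, but it runs along a different track than the paper's. The paper starts from the fact (implicit in the C$^*$-algebra theory of the Toeplitz extension) that $A^{-1} = T_{\phi^{-1}} + K$ with $K$ compact, and then upgrades $K$ to trace class in one line: $AK = I - AT_{\phi^{-1}}$ is trace class because $AT_{\phi^{-1}} = T_\phi T_{\phi^{-1}} + LT_{\phi^{-1}}$ differs from $I$ by a trace class operator, and then $K = A^{-1}(AK)$ is trace class since $\mathcal{L}^1$ is an ideal. You instead factor the inverse as $A^{-1} = (I+K)^{-1}T_{\phi^{-1}}$ with $I + K = T_{\phi^{-1}}A$, reducing everything to the elementary fact that an invertible operator of the form $I + K$ with $K \in \mathcal{L}^1$ has inverse of the same form. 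What your route buys is that it is self-contained: you justify, via Fredholm theory and the index/winding-number correspondence, that $\phi$ is invertible on $S^1$ with winding number zero, that $\phi^{-1} \in (C^\mu(S^1))^*$, and that $T_{\phi^{-1}}$ is itself invertible --- points the paper's proof uses silently (it writes down $T_{\phi^{-1}}$ without comment). What the paper's route buys is brevity and the avoidance of any invertibility claim about $T_{\phi^{-1}}$: it only needs $\mathcal{L}^1$ to be an ideal, not the Brown--Halmos/Douglas invertibility criterion. One cosmetic remark: your sentence ``since $L$ is compact so is $T_\phi$'' should read that $T_\phi$ is \emph{Fredholm}; the intent is clear, but as written it says the wrong thing.
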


\begin{proof}
Let $\phi = \sigma(A)$.  Then $A^{-1} = T_{\phi^{-1}} + K$ for some compact operator $K$; we must show that
$K$ is a trace class operator.  Because $I = AA^{-1} = AT_{\phi^{-1}} + AK$, we see that 
$AK = I -  AT_{\phi^{-1}}$ is trace class.  But the set of trace class operators is an ideal in the algebra of all
bounded operators on $H^2(S^1)$, so $K = A^{-1}(AK)$ is trace class.
\end{proof}

\begin{theorem}
Suppose that $A$ and $B$ are invertible elements of $\mathcal{T}^\mu$.  Then
\[
\det(ABA^{-1}B^{-1}) = \prod_{z \in \mathbb{D}} (\sigma(A), \sigma(B))_z^{-1}.
\]
\end{theorem}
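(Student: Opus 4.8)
The plan is to deduce the statement directly from Theorem \ref{big theorem} by exhibiting $A$ and $B$ themselves (after an innocuous $3\times 3$ padding) as admissible lifts in the construction of $\delta$. Write $\phi = \sigma(A)$ and $\psi = \sigma(B)$. Since $A$ is invertible in $\mathcal{T}^\mu$, the preceding lemma gives $A^{-1} \in \mathcal{T}^\mu$, and because $\sigma$ is a ring homomorphism we get $\phi\,\sigma(A^{-1}) = \sigma(A^{-1})\,\phi = 1$ in $C^\mu(S^1)$; hence $\phi$ is a unit of $C^\mu(S^1)$, i.e. $\phi \in (C^\mu(S^1))^*$, and likewise $\psi \in (C^\mu(S^1))^*$. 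So the right-hand side of the theorem is $\pi(\sigma(A),\sigma(B)) = \pi(\phi,\psi)$, which is exactly the quantity computed by Theorem \ref{big theorem}.

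Next I would introduce the block-diagonal matrices
\[
\widehat{A} = \begin{pmatrix} A & 0 & 0 \\ 0 & A^{-1} & 0 \\ 0 & 0 & I \end{pmatrix}, \qquad
\widehat{B} = \begin{pmatrix} B & 0 & 0 \\ 0 & I & 0 \\ 0 & 0 & B^{-1} \end{pmatrix}.
\]
These lie in $\GL(3, \mathcal{T}^\mu)$ — their inverses are $\operatorname{diag}(A^{-1}, A, I)$ and $\operatorname{diag}(B^{-1}, I, B)$, whose entries are in $\mathcal{T}^\mu$ by the preceding lemma — and applying $\sigma$ entrywise yields $\sigma(\widehat{A}) = \Phi^{1,2}$ and $\sigma(\widehat{B}) = \Psi^{1,3}$. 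Thus $\widehat{A}$ and $\widehat{B}$ are legitimate choices for the lifts $R$ and $S$ in the definition of $\delta$, so by Lemma \ref{delta well defined} we have $\delta(\phi,\psi) = \det(\widehat{A}\,\widehat{B}\,\widehat{A}^{-1}\widehat{B}^{-1})$. Multiplying the four diagonal matrices entrywise collapses the lower two diagonal slots to $I$ (the second slot gives $A^{-1}\cdot I\cdot A\cdot I = I$, the third $I\cdot B^{-1}\cdot I\cdot B = I$), leaving
\[
\widehat{A}\,\widehat{B}\,\widehat{A}^{-1}\widehat{B}^{-1} = \begin{pmatrix} ABA^{-1}B^{-1} & 0 & 0 \\ 0 & I & 0 \\ 0 & 0 & I \end{pmatrix},
\]
and by multiplicativity of the Fredholm determinant over block-diagonal operators its determinant is $\det(ABA^{-1}B^{-1})$. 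Combining this with Theorem \ref{big theorem} gives $\det(ABA^{-1}B^{-1}) = \delta(\phi,\psi) = \pi(\phi,\psi) = \prod_{z \in \mathbb{D}}(\sigma(A),\sigma(B))_z^{-1}$, as claimed.

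There is no substantive obstacle here; the content is entirely carried by Theorem \ref{big theorem}, and the block-diagonal trick simply lets us use $A,B$ in place of the explicit Toeplitz lifts of Lemma \ref{existence of lifts}, which is permissible exactly because of Lemma \ref{delta well defined}. The one point that should be checked so that the statement even makes sense is that $ABA^{-1}B^{-1} - I$ is trace class: writing $A = T_\phi + L_1$ and $B = T_\psi + L_2$ with $L_1, L_2 \in \mathcal{L}^1$, the commutator $AB - BA$ equals $T_\phi T_\psi - T_\psi T_\phi$ plus terms each involving some $L_i$, so it lies in $\mathcal{L}^1$ (that commutator being trace class since $\phi,\psi$ are smooth on $S^1$, and $\mathcal{L}^1$ being an ideal), whence $ABA^{-1}B^{-1} - I = (AB - BA)A^{-1}B^{-1} \in \mathcal{L}^1$. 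Finally, I would remark that the argument never uses that $\phi$ or $\psi$ individually has winding number zero — invertibility of $A$ and $B$ forces it, but $\delta$ and $\pi$ are defined on all of $(C^\mu(S^1))^*$ in any case.
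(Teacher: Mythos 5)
Your proposal is correct and follows essentially the same route as the paper: the paper also takes the block-diagonal lifts $\operatorname{diag}(A, A^{-1}, I)$ and $\operatorname{diag}(B, I, B^{-1})$ of $\Phi^{1,2}$ and $\Psi^{1,3}$, observes that their multiplicative commutator is $\operatorname{diag}(ABA^{-1}B^{-1}, I, I)$, and invokes Theorem \ref{big theorem}. Your additional checks (that $\sigma(A)$ and $\sigma(B)$ are units, and that $ABA^{-1}B^{-1} - I$ is trace class) are sensible housekeeping the paper leaves implicit.
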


\begin{proof}
Write $\phi = \sigma(A)$ and $\psi = \sigma(B)$.  Then
\[
R = \begin{pmatrix} A & 0 & 0 \\ 0 & A^{-1} & 0 \\ 0 & 0 & I \end{pmatrix} \qquad
\text{and} \qquad
S = \begin{pmatrix} B & 0 & 0 \\ 0 & I & 0 \\ 0 & 0 & B^{-1} \end{pmatrix}
\]
are lifts of $\Phi^{1,2}$ and $\Psi^{1,3}$ respectively.  Furthermore,
\[
RSR^{-1}S^{-1} =
\begin{pmatrix} ABA^{-1}B^{-1} & 0 & 0 \\ 0 & I & 0 \\  0 & 0 & I \end{pmatrix},
\]
so $\det(ABA^{-1}B^{-1}) = \det(RSR^{-1}S^{-1})$.  The desired result therefore follows from 
Theorem \ref{big theorem}.
\end{proof}

\begin{corollary}
Suppose that $\phi$ and $\psi$ are in $(C^\mu(S^1))^*$ and 
have winding number zero with respect to the origin.  Then
\[
\det(T_\phi T_\psi T_\phi^{-1} T_\psi^{-1})  = \prod_{z \in \mathbb{D}} (\phi, \psi)_z^{-1}.
\]
\end{corollary}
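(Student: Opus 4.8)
The plan is to obtain this corollary as an immediate specialization of the theorem just proved about multiplicative commutators $ABA^{-1}B^{-1}$ of invertible elements $A$ and $B$ of $\mathcal{T}^\mu$. The only thing to check is that the hypotheses on $\phi$ and $\psi$ put $T_\phi$ and $T_\psi$ into the class of operators to which that theorem applies, after which the formula falls out by taking $A = T_\phi$ and $B = T_\psi$.

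First I would note that $T_\phi$ and $T_\psi$ belong to $\mathcal{T}^\mu$: since $\phi$ and $\psi$ lie in $C^\mu(S^1)$, the operators $T_\phi$ and $T_\psi$ are of the form $T_\phi + 0$ and $T_\psi + 0$ with the trace class term equal to zero, which is exactly the defining form of an element of $\mathcal{T}^\mu$. Next, because $\phi$ and $\psi$ are invertible on $S^1$ and have winding number zero with respect to the origin, Corollary 7.27 of \cite{Douglas} shows that $T_\phi$ and $T_\psi$ are invertible as operators on $H^2(S^1)$; and by the lemma immediately preceding the theorem, their inverses again lie in $\mathcal{T}^\mu$. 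Hence $A = T_\phi$ and $B = T_\psi$ are invertible elements of $\mathcal{T}^\mu$.

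Finally, since $\sigma(T_\phi) = \phi$ and $\sigma(T_\psi) = \psi$ by the definition of the symbol map, applying the preceding theorem with $A = T_\phi$ and $B = T_\psi$ gives $\det(T_\phi T_\psi T_\phi^{-1} T_\psi^{-1}) = \prod_{z \in \mathbb{D}}(\phi, \psi)_z^{-1}$, which is precisely the assertion; in particular, this recovers Theorem \ref{main} from the introduction, now with a proof in hand. I do not anticipate any obstacle here: all of the real content — the passage to $3 \times 3$ matrices and the $K$-theoretic lifting in Lemmas \ref{existence of lifts} and \ref{delta well defined}, the bimultiplicativity and antisymmetry recorded in Propositions \ref{properties of pi} and \ref{properties of delta}, the explicit eigenvalue computations in Propositions \ref{both holomorphic}, \ref{holomorphic and linear}, and \ref{alpha < 1 beta < 1}, and the reduction to rational symbols via the Fundamental Theorem of Algebra in Theorem \ref{big theorem} — has already been carried out, so this corollary is a one-line consequence.
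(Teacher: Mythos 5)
Your proof is correct and is exactly the argument the paper intends: the corollary follows by applying the preceding theorem with $A = T_\phi$ and $B = T_\psi$, using Corollary 7.27 of \cite{Douglas} for invertibility and the fact that $\sigma(T_\phi) = \phi$ and $\sigma(T_\psi) = \psi$. No issues.
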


\end{document}